\numberwithin{equation}{section}
\definecolor{skyblue}{rgb}{0.85,0.85,1}
\newtheorem{lemma}{Lemma}
\newtheorem{prop}{Proposition}
\newtheorem{theorem}{Theorem}
\newtheorem{cor}{Corollary}
\newtheorem{rem}{Remark}
\newtheorem{define}{Definition}
\numberwithin{lemma}{section}
\numberwithin{prop}{section}
\numberwithin{theorem}{section}
\numberwithin{cor}{section}
\numberwithin{conj}{section}
\numberwithin{rem}{section}
\DeclareMathOperator{\Mor}{Mor}
\newcommand{\bbC}{\mathbb{C}}
\newcommand{\bbR}{\mathbb{R}}
\newcommand{\bbN}{\mathbb{N}}
\newcommand{\vp}{\varphi}
\newcommand{\p}{\partial}
\DeclareMathOperator{\Gr}{Gr}
\DeclareMathOperator{\GL}{GL}
\newcommand{\eps}{\epsilon}
\newcommand{\Maslov}{\mathrm{Maslov}(\vp)}
\begin{document}

\title{Opening the Maslov Box for Traveling Waves in Skew-Gradient Systems}
\author[P. Cornwell]{Paul Cornwell}
\email{pcorn@live.unc.edu}
\address{Department of Mathematics, UNC Chapel Hill, Phillips Hall CB \#3250, Chapel Hill, NC 27516}

\begin{abstract}
We obtain geometric insight into the stability of traveling pulses for reaction-diffusion equations with skew-gradient structure. For such systems, a Maslov index of the traveling wave can be defined and related to the eigenvalue equation for the linearization $L$ about the wave. We prove two main results about this index. First, for general skew-gradient systems, it is shown that the Maslov index gives a lower bound on the number of real, unstable eigenvalues of $L$. Second, we show how the Maslov index gives an exact count of all unstable eigenvalues for fast traveling waves in a FitzHugh-Nagumo system. The latter proof involves the Evans function and reveals a new geometric way of understanding algebraic multiplicity of eigenvalues.
\end{abstract}

\maketitle

\tableofcontents

\section{Introduction}
The paragon of stability analysis for nonlinear waves is a result that relates spectral information to qualities of the wave itself. In principle, this could explain why some patterns and structures are prevalent in nature, while others are not. The classic example of this is Sturm-Liouville theory, which equates the number of unstable modes of a steady state solution of a scalar reaction-diffusion equation to the number of critical points it has. (See \S 2.3.2 of \cite{KP13}). For systems of equations, generalizations of Sturm-Liouville theory lead naturally to the Maslov index, which is a winding number for curves of Lagrangian subspaces. One drawback of the Maslov index as a stability index is that it has typically been applied only in a relatively small class of systems, namely those for which the steady state equation has a Hamiltonian structure. In this work, we show how the Maslov index can give a lower bound on the number of unstable eigenvalues for the linearization about a traveling wave in reaction-diffusion equations with skew-gradient structure. Such systems are necessarily not Hamiltonian. Additionally, we show how the same index gives an exact count of the unstable eigenvalues in a FitzHugh-Nagumo system. The proofs use an adaptation of the ``Maslov box" (see, for example, \cite{JLM13,HLS16,BCJLMS17,JLS17}) and an entirely intersection-based formulation of the Maslov index.

The systems of interest are reaction-diffusion equations of the form \begin{equation}\label{gen pde}
u_t=u_{xx}+Q Sf(u),
\end{equation} where $x,t\in\bbR$ are space and time respectively, $u\in\bbR^n$, and $f(u)=\nabla F(u)$ is the gradient of a function $F:\bbR^n\rightarrow\bbR$. The matrix $S\in\GL_n(\bbR)$ is positive and diagonal, and $Q\in\GL_{n}(\bbR)$ has the form \begin{equation}
Q=\mathrm{diag}\{d_1,\dots,d_n\},
\end{equation} where $d_i=\pm 1$ for all $i$. Such systems were dubbed ``skew-gradient" by Yanagida \cite{Yan02a,Yan02b}. We assume that (\ref{gen pde}) possesses a traveling pulse solution $\hat{u}$ which depends on one variable $z=x-ct$. Such solutions have a fixed profile and move with a constant speed $c$. Without loss of generality, we assume that \begin{equation}\label{speed negative}
c<0,
\end{equation} meaning that the wave moves to the left. There has been considerable progress in the stability analysis of \emph{standing waves} of (\ref{gen pde})--see below for more detail--but the known results do not apply to traveling waves. We aim to use the Maslov index to give a systematic treatment of traveling waves in such systems.

Written in a moving frame, a traveling pulse of (\ref{gen pde}) is a steady state of the equation \begin{equation}\label{tw pde}
u_t=u_{zz}+cu_z+QSf(u),
\end{equation} which decays exponentially to a constant state $u_\infty\in\bbR^n$ as $z\rightarrow\pm\infty$. For simplicity, we take $u_\infty=0$, which means that $f(0)=0$. We make the further assumption that $0$ is a stable equilibrium for the kinetics equation associated with (\ref{gen pde}). More precisely, this means that \begin{equation}\label{Turing assumption}
\text{there exists } \beta<0 \text{ such that the } n \text{ eigenvalues } \nu_i \text{ of } QS f'(0) \text{ satisfy } \mathrm{Re }\, \nu_i<\beta.
\end{equation} Among systems of the form (\ref{gen pde}) are activator-inhibitor systems, which are known to support pattern formation. Assumption (\ref{Turing assumption}) is therefore natural, since it is of interest to study the stability of structures which emanate from stable, homogeneous states that are destabilized in the presence of diffusion \cite{Murray,Turing}. Since $\hat{u}_t=0$, the traveling wave equation is an ODE which can be converted to a first order system by introducing the variable $v=S^{-1}u_z$: \begin{equation}\label{tw ode}
\left(\begin{array}{c}
u\\v
\end{array}\right)_z=\left(\begin{array}{c}
Sv\\
-cv-Qf(u)
\end{array}\right).
\end{equation} This perspective is useful, because it opens up the possibility of analyzing (\ref{gen pde}) using dynamical systems techniques. For example, the traveling wave $\vp=(\hat{u},S^{-1}\hat{u})$ is seen to be a homoclinic orbit to the fixed point $(0,0)$. The stable and unstable manifolds of this fixed point--$W^s(0)$ and $W^u(0)$ respectively--will play an important role in our analysis, as will their tangent spaces at $0$: \begin{equation}\label{un/stable subspaces}
V^s(0)=T_0W^s(0),\hspace{.05 in} V^u(0)=T_0W^u(0).
\end{equation}

Our aim is to use the Maslov index to analyze the stability of $\hat{u}$, which is defined as follows. \begin{define}\label{stability defn}
	The traveling wave $\hat{u}(z)$ is \textbf{asymptotically stable} relative to (\ref{tw pde}) if there is a neighborhood $V\subset BU(\bbR,\bbR^n)$ of $\hat{u}(z)$ such that if $u(z,t)$ solves (\ref{tw pde}) with $u(z,0)\in V$, then \[||\hat{u}(z+k)-u(z,t)||_\infty\rightarrow 0 \] as $t\rightarrow\infty$ for some $k\in\bbR$.
\end{define} The stability question immediately leads to the operator \begin{equation}\label{L defn}
L:=\p_z^2+c\p_z+QSf'(\hat{u})
\end{equation} obtained by linearizing the right-hand side of (\ref{tw pde}) around $\hat{u}$. It is known \cite{BJ89,Henry} that the nonlinear stability of $\hat{u}$ (in the sense of Definition \ref{stability defn}) is determined by locating the spectrum of $L$. Assumption (\ref{Turing assumption}) guarantees that the essential spectrum of $L$ is contained in the left-half plane. It therefore suffices to determine whether $L$ has any eigenvalues of positive real part. This is the task for the Maslov index. Before discussing that topic further, we briefly review the history of stability in skew-gradient systems.

The papers \cite{Yan02a,Yan02b} considered standing waves, which are pulses with $c=0$. An instability criterion for these waves was derived in \cite{Yan02b} using an orientation index related to derivatives of the Evans function. Stronger results, akin to those obtained in this work (lower bounds on the Morse index and a stability criterion), were then obtained in \cite{CH14} using the Maslov index. The strategy in that work was to use the index to aid in the calculation of spectral flow \cite{APS} for a family of self-adjoint operators. This calculation relied on a change of variables in the eigenvalue equation that revealed a Hamiltonian structure. A similar change of variables was made in \cite{Jo88} to define and use the Maslov index for standing waves in nonlinear Schr\"{o}dinger equations. An unstable eigenvalue was shown to exist by means of a shooting argument in the manifold of Lagrangian planes. More precisely, a change in the homotopy class of a loop was observed as a (spectral) parameter varied. The existence of the eigenvalue follows since such a change can only occur at an eigenvalue.

It is important to note that in each of the cases mentioned above, the waves considered had zero speed. By contrast, \cite{CJ17} and this work consider traveling waves. This difference is significant, since there is \emph{no} change of variables that makes the eigenvalue equation for $L$ in (\ref{L defn}) Hamiltonian. However, there is a symplectic form for which the set of Lagrangian planes is invariant under the eigenvalue equation; hence the Maslov index can be defined. The trade-off is that self-adjointness of $L$ is lost, so that in general the spectrum will not be real. This spurred the authors of \cite{CJ17} to consider the Evans function $D(\lambda)$ \cite{AGJ,Sandstede02}, and it was shown that the sign of $D'(0)$ is determined by the parity of the Maslov index. On the other hand, the main results of this work are formulated without reference to the Evans function.

Consider $\bbR^{2n}$ endowed with a symplectic form $\omega$. By symplectic, we mean that $\omega$ is nondegenerate, skew-symmetric and bilinear. An $n$-dimensional subspace $V\subset\bbR^{2n}$ is called \emph{Lagrangian} if $\omega(v_1,v_2)=0$ for all $v_{1,2}\in V$. The collection of all such subspaces is clearly a subset of $\Gr_n(\bbR^{2n})$, the Grassmannian of all $n$-dimensional subspaces of $\bbR^{2n}$. In fact, this set is actually a smooth manifold of dimension $n(n+1)/2$, called the Lagrangian Grassmannian and denoted $\Lambda(n)$. It is well-known \cite{Arnold67,Maslov,McDS} that $\pi_1(\Lambda(n))=\mathbb{Z}$ for all $n$, and thus a winding number can be defined for loops in this space. This winding number is the Maslov index. Broadly speaking, the Maslov index counts how many times two paths of Lagrangian subspaces intersect each other. (One of the curves may be fixed, which is the traditional way of defining the index \cite{Maslov}.) We will consider paths that encode the left and right boundary data of potential eigenfunctions for $L$. An intersection therefore corresponds to a function that satisfies both boundary conditions and hence is an eigenfunction.

The rest of this paper is organized as follows. In \S 2, we set up the eigenvalue problem and identify the symplectic structure that makes the analysis possible. In \S 3, the Maslov index is defined, both for a path of Lagrangian planes and for a pair of curves of Lagrangian planes. This includes a careful consideration of the ``crossing form'' of \cite{RS93}. In \S 4, we introduce the ``Maslov box" of \cite{HLS16} and show how the Maslov index can be used to give a lower bound on the number of unstable eigenvalues for $L$ in (\ref{L defn}). We apply the same framework to a FitzHugh-Nagumo system in \S 5 and show how the Maslov index gives an exact count of the positive, unstable eigenvalues in this case. Additionally, we prove that any unstable spectrum must be real, from which it follows that the Maslov index detects all unstable eigenvalues. Finally, in \S 6 we show what the Maslov index reveals about the algebraic multiplicity of eigenvalues. This is accomplished by relating the crossing form to derivatives of the Evans function. In particular, we provide a new geometric interpretation of simplicity of an eigenvalue.

\section{Eigenvalue Equation and Symplectic Structure}
As noted above, the stability of $\hat{u}$ is assessed by determining the spectrum $\sigma(L)$ of the operator $L$ in (\ref{L defn}). First, we say that $\lambda\in\bbC$ is an eigenvalue for $L$ if there exists a solution $p\in BU(\bbR,\bbC^n)$ to the equation
\begin{equation}\label{eval problem gen}
Lp=\lambda p.
\end{equation}
The set of isolated eigenvalues of $L$ of finite multiplicity is denoted $\sigma_n(L)$. Comparing with (\ref{L defn}), setting $p_z=Sq$ converts (\ref{eval problem gen}) to the first order system \begin{equation}\label{eval eqn matrix}
\left(\begin{array}{c}
p\\q
\end{array}\right)'=\left(\begin{array}{c c}
0 & S\\
\lambda S^{-1}-Qf'(\hat{u}) & -cI
\end{array} \right)\left(\begin{array}{c}
p\\q
\end{array}\right).
\end{equation} As is commonly done for Evans function analyses (see \cite{AGJ}), we abbreviate (\ref{eval eqn matrix}) as \begin{equation}
Y'(z)=A(\lambda,z)Y(z),
\end{equation} with $Y\in \bbC^n$ and $A(\lambda,z)\in M_n(\bbC^{2n})$. Assumption (\ref{Turing assumption}) guarantees that $\hat{u}$ approaches $0$ exponentially, and thus there is a well-defined matrix \begin{equation}
A_\infty(\lambda)=\lim\limits_{z\rightarrow\pm\infty}A(\lambda,z),
\end{equation} and this limit is also achieved exponentially quickly. The eigenvalues of $L$ comprise only part of the spectrum; the rest is the essential spectrum $\sigma_\mathrm{ess}(L)$. For systems of the form (\ref{gen pde}), it is known (Lemma 3.1.10 of \cite{KP13}) that the essential spectrum is given by \begin{equation}
\sigma_\mathrm{ess}(L)=\{\lambda\in\bbC:A_\infty(\lambda) \text{ has an eigenvalue }\mu\in i\bbR \}.
\end{equation}

We claim that $\sigma_\mathrm{ess}(L)$ is contained in the half-plane \begin{equation}
H=\{\lambda\in\bbC:\mathrm{Re}\,\lambda<\beta\}.
\end{equation}Indeed, a simple calculation using (\ref{eval eqn matrix}) shows that the eigenvalues of $A_\infty(\lambda)$ are given by \begin{equation}\label{evals of A(lambda)}
\mu_j(\lambda)=\frac{1}{2}\left(-c\pm\sqrt{c^2+4(\lambda-\nu_i)}\right),
\end{equation} with $\nu_i$ from (\ref{Turing assumption}). We need to show that $A_\infty(\lambda)$ has no purely imaginary eigenvalues if $\mathrm{Re } \lambda\geq\beta$, which is clearly equivalent to showing that $\mathrm{Re }\sqrt{c^2+4(\lambda-\nu_i)}\neq-c$ for such $\lambda$. The formula \begin{equation}\label{sqrt ineq}
\mathrm{Re }\sqrt{a+bi}=\frac{1}{\sqrt{2}}\sqrt{\sqrt{a^2+b^2}+a}
\end{equation} and the fact that $\mathrm{Re}\,(c^2+4(\lambda-\nu_i))>0$ from (\ref{Turing assumption}) together imply that \begin{equation}
\mathrm{Re}\sqrt{c^2+4(\lambda-\nu_i)}\geq\sqrt{\mathrm{Re}(c^2+4(\lambda-\nu_i))}>\sqrt{c^2}=-c,
\end{equation} as desired. This calculation actually proves that $A_\infty(\lambda)$ has exactly $n$ eigenvalues of positive real part and $n$ eigenvalues of negative real part for $\lambda\in (\bbC\setminus H).$ We label these $\mu_i(\lambda)$ in order of increasing real part and observe that \begin{equation}\label{evals inequality}
\mathrm{Re}\,\mu_1(\lambda)\leq \dots\leq\mathrm{Re}\,\mu_n(\lambda)<0<-c<\mathrm{Re}\,\mu_{n+1}(\lambda)\leq\dots\leq\mathrm{Re}\,\mu_{2n}(\lambda).
\end{equation} Furthermore, one sees from (\ref{evals of A(lambda)}) that for each $1\leq i\leq n$ we have \begin{equation}
\mu_i(\lambda)+\mu_{i+n}(\lambda)=-c.
\end{equation} It then follows from, for example, Theorem 3.2 of \cite{Sandstede02} that (\ref{eval eqn matrix}) has exponential dichotomies on $\bbR^+$ and $\bbR^-$ for $\lambda\in\bbC\setminus H$, allowing us to define $n$-dimensional vector spaces \begin{equation}\label{(un)stable bundles}
 \begin{aligned}
 E^u(\lambda,z) & = \{\xi(z)\in\bbC^{2n}:\xi \text{ solves } (\ref{eval eqn matrix}) \text{ and } \xi\rightarrow 0 \text{ as }z\rightarrow -\infty \}\\
 E^s(\lambda,z) & = \{\xi(z)\in\bbC^{2n}:\xi \text{ solves } (\ref{eval eqn matrix}) \text{ and } \xi\rightarrow 0 \text{ as }z\rightarrow \infty \}
 \end{aligned}.
 \end{equation} We call these sets the \emph{unstable} and \emph{stable bundles} respectively. It is known that $E^{s/u}(\lambda,z)$ vary analytically in $\lambda$ for each $z$. Moreover, the decay of the solutions in $E^{u/s}(\lambda,z)$ is exponential, and any solution of (\ref{eval eqn matrix}) that is bounded at $-\infty$ (resp. $\infty$) must be a member of $E^u(\lambda,z)$ (resp. $E^s(\lambda,z))$. It follows that $\lambda\in\bbC$ is an eigenvalue for $L$ if and only if the intersection $E^u(\lambda,z)\cap E^s(\lambda,z)$ is nonempty for some (and hence all) $z\in\bbR$. The fact that any eigenfunction of $L$ must decay exponentially allows us instead to pose the eigenvalue problem on the Hilbert space $H^1(\bbR,\bbC^n)$. This will pay dividends later when we consider the FitzHugh-Nagumo system.
 
We now focus our attention on real $\lambda\geq\beta$. In this case, $E^{s/u}(\lambda,z)$ are real vector spaces. To identify the symplectic structure, we introduce the matrix \begin{equation}\label{complex structure}
J=\left(\begin{array}{c c}
0 & Q\\
-Q & 0
\end{array}\right).
\end{equation} Since $Q^2=I$ and $Q^*=Q$, it follows that $J^2=-I$ and $J^*=-J$. We therefore call $J$ a complex structure on $\bbR^{2n}$. If we denote by $\langle\cdot,\cdot\rangle$ the standard inner product on $\bbR^{2n}$, then \begin{equation}\label{symplectic form compatible}
\omega(a,b):=\langle a,Jb\rangle
\end{equation} defines a symplectic form on $\bbR^{2n}$, see \S 1 of \cite{Heck13}, for example. The following theorem underpins all of the ensuing analysis.

\begin{theorem}\label{form invariance thm}
Let $Y_1$, $Y_2$ be any two solutions of (\ref{eval eqn matrix}) for fixed $\lambda\in\bbR$. Then \begin{equation}
\frac{d}{dz}\omega(Y_1,Y_2)=-c\,\omega(Y_1,Y_2).
\end{equation} In particular, if $\omega(Y_1(z_0),Y_2(z_0))=0$ for some $z_0\in\bbR$, then $\omega(Y_1,Y_2)\equiv 0$. More generally, the symplectic form \begin{equation}\label{Omega defn}
\Omega:=e^{cz}\omega
\end{equation} is constant in $z$ on any two solutions of (\ref{eval eqn matrix}).
\end{theorem}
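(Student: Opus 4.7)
The plan is to reduce the theorem to a single infinitesimal identity for the coefficient matrix, namely
\[
A(\lambda,z)^T J + J A(\lambda,z) = -c\,J,
\]
and then integrate. First I would differentiate $\omega(Y_1,Y_2) = \langle Y_1, J Y_2\rangle$ using the product rule and substitute $Y_i' = A(\lambda,z) Y_i$, obtaining
\[
\frac{d}{dz}\omega(Y_1,Y_2) = \langle A Y_1, J Y_2\rangle + \langle Y_1, J A Y_2\rangle = \langle Y_1,(A^T J + JA) Y_2\rangle.
\]
So the whole theorem rests on the boxed identity.

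To verify $A^T J + JA = -cJ$, I would just carry out the block computation. Write
\[
A = \begin{pmatrix} 0 & S \\ \lambda S^{-1}-Qf'(\hat{u}) & -cI\end{pmatrix},\qquad J = \begin{pmatrix} 0 & Q \\ -Q & 0\end{pmatrix},
\]
and use the following three facts: (i) $Q$, $S$, and $S^{-1}$ are diagonal, so they are symmetric and pairwise commute (in particular $QS=SQ$ and $QS^{-1}=S^{-1}Q$); (ii) $Q^2 = I$; (iii) $f'(\hat{u})$ is the Hessian of $F$ and is therefore symmetric. Facts (i) and (iii) give
\[
A^T = \begin{pmatrix} 0 & \lambda S^{-1}-f'(\hat{u})Q \\ S & -cI\end{pmatrix}.
\]
Multiplying out $A^T J$ and $JA$ as $2\times 2$ block matrices, the $(1,1)$ blocks cancel against one another (using $Q^2=I$ and symmetry of $f'(\hat{u})$), the $(2,2)$ blocks also cancel (using $QS=SQ$), and the off-diagonal blocks combine to give precisely $-cJ$. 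This is the only real step and the main — though very mild — obstacle; everything else is bookkeeping.

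With the identity in hand, we have $\frac{d}{dz}\omega(Y_1,Y_2) = -c\,\omega(Y_1,Y_2)$, which is the first assertion. Solving this scalar linear ODE gives
\[
\omega(Y_1(z),Y_2(z)) = e^{-c(z-z_0)}\omega(Y_1(z_0),Y_2(z_0)),
\]
so vanishing at any one point $z_0$ forces vanishing on all of $\mathbb{R}$. Finally, for $\Omega = e^{cz}\omega$, the product rule gives
\[
\frac{d}{dz}\Omega(Y_1,Y_2) = c\,e^{cz}\omega(Y_1,Y_2) + e^{cz}\bigl(-c\,\omega(Y_1,Y_2)\bigr) = 0,
\]
proving that $\Omega$ is constant along pairs of solutions, which is the last claim.
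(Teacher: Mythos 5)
Your proposal is correct and follows essentially the same route as the paper's proof: differentiate $\omega(Y_1,Y_2)=\langle Y_1,JY_2\rangle$, reduce to the block-matrix identity $JA+A^TJ=-cJ$ (verified using the symmetry of $f'(\hat{u})$ and the fact that $Q$, $S$ are diagonal with $Q^2=I$), and then deduce the remaining claims by integrating the resulting scalar ODE. The only cosmetic difference is that you make the integrated form $\omega(Y_1(z),Y_2(z))=e^{-c(z-z_0)}\omega(Y_1(z_0),Y_2(z_0))$ explicit, which the paper leaves implicit.
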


\begin{proof}
A direct computation gives that \begin{equation}
\begin{aligned}
\frac{d}{dz}\omega(Y_1,Y_2) & =\omega(Y_1,A(\lambda,z)Y_2)+\omega(A(\lambda,z)Y_1,Y_2)\\
& = \langle Y_1,JA(\lambda,z)Y_2\rangle+\langle A(\lambda,z)Y_1,JY_2\rangle\\
& = \langle Y_1,\left[JA+A^TJ\right]Y_2\rangle. 
\end{aligned}
\end{equation} In light of (\ref{symplectic form compatible}), we therefore need to show that \begin{equation}\label{compatibility calc}
JA+A^TJ=-cJ.
\end{equation} Recalling that $S$ and $Q$ are diagonal and that  $(f'(\hat{u}))^T=F''(\hat{u})^T=F''(\hat{u})=f'(\hat{u})$, we compute \begin{equation}
\begin{aligned}
JA+A^TJ & =\left(\begin{array}{c c}
\lambda QS^{-1}-f'(\hat{u}) & -cQ\\
0 & -QS
\end{array}\right)+\left(\begin{array}{c c}
-\lambda S^{-1}Q+f'(\hat{u}) & 0\\
cQ & SQ
\end{array}\right)\\
& = -c\left(\begin{array}{c c}
0 & Q\\
-Q & 0
\end{array}\right)=-cJ.
\end{aligned}
\end{equation} For the second part, we see that \begin{equation}
\frac{d}{dz}\Omega(Y_1,Y_2)=e^{cz}\left(c\omega(Y_1,Y_2)+\frac{d}{dz}\omega(Y_1,Y_2)\right)=0.
\end{equation}
\end{proof}

For fixed $\lambda\in\bbR$, it is a standard result that (\ref{eval eqn matrix}) respects linear independence of solutions. It follows that (\ref{eval eqn matrix}) induces a flow on $\Gr_k(\bbR^{2n})$ for any $k$. The following is then a consequence of the preceding theorem.
\begin{cor}\label{L(n) invariant}
The set of $\omega$-Lagrangian planes $\Lambda(n)$ is an invariant manifold for the equation induced by (\ref{eval eqn matrix}) on $\Gr_n(\bbR^{2n})$.
\end{cor}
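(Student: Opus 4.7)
The plan is to show that a Lagrangian initial condition remains Lagrangian under the flow induced by the eigenvalue equation, which will be essentially immediate from Theorem \ref{form invariance thm}.

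First I would observe that, because (\ref{eval eqn matrix}) is linear, its flow preserves linear independence of solutions, so it descends to a well-defined flow on $\Gr_n(\bbR^{2n})$ (as already noted in the text just before the corollary). Thus given any $V_0 \in \Lambda(n)$ at $z = z_0$, I can choose a basis $Y_1(z_0), \dots, Y_n(z_0)$ of $V_0$, evolve each $Y_i$ as a solution of (\ref{eval eqn matrix}), and define $V(z) = \mathrm{span}\{Y_1(z), \dots, Y_n(z)\}$. This is the image of $V_0$ under the induced Grassmannian flow, and it is $n$-dimensional for every $z$.

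The remaining content is to check that $V(z)$ is $\omega$-Lagrangian for every $z$. Since $V_0$ is Lagrangian, we have $\omega(Y_i(z_0), Y_j(z_0)) = 0$ for all $1 \le i, j \le n$. By the second statement of Theorem \ref{form invariance thm}, this implies $\omega(Y_i(z), Y_j(z)) \equiv 0$ for all $z \in \bbR$ and all $i, j$. Because $\omega$ is bilinear, it then vanishes on all of $V(z) \times V(z)$, so $V(z)$ is isotropic; combined with $\dim V(z) = n$, this yields $V(z) \in \Lambda(n)$. Hence $\Lambda(n)$ is invariant.

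There is no real obstacle here: the corollary is a direct translation of Theorem \ref{form invariance thm} from the language of pairs of solutions to the language of $n$-planes. The only subtlety worth flagging is the basis-independence of the argument, but this is automatic because the Lagrangian condition is a property of the span and $\omega$ is bilinear, so verifying it on one basis suffices.
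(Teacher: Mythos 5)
Your argument is correct and is exactly the one the paper intends: the text preceding the corollary notes that the linear flow preserves linear independence (hence descends to $\Gr_n(\bbR^{2n})$), and the second statement of Theorem \ref{form invariance thm} then propagates the vanishing of $\omega$ on a basis of the initial plane to all $z$, which by bilinearity gives isotropy and hence the Lagrangian property. Your remark on basis-independence is a fine (if automatic) point; nothing is missing.
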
 

As explained above, eigenvalues are found by looking for intersections of the sets $E^{s/u}(\lambda,z)$. To make use of Corollary \ref{L(n) invariant}, it is therefore critical that the stable and unstable bundles are actually Lagrangian. We show now that this is indeed the case.

\begin{theorem}\label{bundles Lag thm}
For all $\lambda\in\bbR\cap(\bbC-H)$ and $z\in\bbR$, the subspaces $E^u(\lambda,z)$ and $E^s(\lambda,z)$ are Lagrangian.
\end{theorem}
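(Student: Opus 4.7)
The plan is to combine Theorem \ref{form invariance thm} with the exponential decay of solutions built into the definitions of $E^{u/s}(\lambda,z)$. Both bundles are already known to have dimension $n$, so it will suffice to show each is $\omega$-isotropic; an $n$-dimensional isotropic subspace of $\bbR^{2n}$ is automatically Lagrangian.

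I would begin with the stable bundle at a base point $z_0$. For $Y_1, Y_2 \in E^s(\lambda, z_0)$, extend them along the flow of (\ref{eval eqn matrix}). By construction $\|Y_i(z)\| \to 0$ exponentially as $z \to +\infty$, so bilinearity of $\omega$ forces $\omega(Y_1(z), Y_2(z)) \to 0$. Since $c < 0$, the factor $e^{cz}$ also tends to $0$. Hence $\Omega(Y_1, Y_2) = e^{cz}\omega(Y_1(z), Y_2(z))$, which is constant in $z$ by Theorem \ref{form invariance thm}, must vanish identically, giving $\omega(Y_1(z_0), Y_2(z_0)) = 0$.

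For the unstable bundle I would run the mirror argument as $z \to -\infty$. The subtlety is that $e^{cz}$ now diverges, so the decay of $\omega$ on any pair of $E^u$-solutions must overpower it. The exponential dichotomy on $\bbR^-$ gives a bound $\|Y(z)\| \leq C_\alpha e^{\alpha z}$ for any $\alpha$ strictly less than $\mathrm{Re}\,\mu_{n+1}(\lambda)$, and the spectral gap $\mathrm{Re}\,\mu_{n+1}(\lambda) > -c$ from (\ref{evals inequality}) lets us fix such an $\alpha$ with $2\alpha + c > 0$. Then $|\Omega(Y_1, Y_2)| \leq C\, e^{(c + 2\alpha)z} \to 0$ as $z \to -\infty$, forcing $\Omega \equiv 0$ and hence $\omega$ vanishes on $E^u(\lambda,z_0)$ as well.

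The rate comparison for $E^u$ is the one nontrivial point: unlike the stable case, where both factors of $\Omega = e^{cz}\omega$ vanish at $+\infty$, here $e^{cz}$ grows and we must extract enough decay from the dichotomy. The requirement $2\alpha > -c$ is comfortably met, since (\ref{evals inequality}) already allows $\alpha$ to exceed $-c$ itself, so no additional spectral input beyond what has been established is needed.
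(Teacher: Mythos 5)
Your proposal is correct and follows essentially the same route as the paper: use the $z$-constancy of $\Omega=e^{cz}\omega$ from Theorem \ref{form invariance thm} and evaluate the limit at $+\infty$ for $E^s$ and at $-\infty$ for $E^u$, where the spectral gap $\mathrm{Re}\,\mu_{n+1}(\lambda)>-c$ from (\ref{evals inequality}) supplies the needed decay. The only cosmetic difference is that you split the weight $e^{cz}$ between the two solutions (needing $2\alpha>-c$), whereas the paper absorbs it entirely into one factor (using decay faster than $e^{-cz}$); both rest on the same dichotomy estimate.
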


\begin{proof}
First, it is clear that $\omega$ and $\Omega$ define the same set of Lagrangian planes. By Theorem \ref{form invariance thm}, we just need to show that $\Omega(Y_1,Y_2)=0$ for some value of $z$, given $Y_1,Y_2\in E^{s/u}(\lambda,z)$. We begin with $E^s(\lambda,z)$. By definition, $Y_1,Y_2\in E^s(\lambda,z)$ decay to $0$ as $z\rightarrow\infty$. Since $c<0$, it is easy to see that \begin{equation}
\Omega(Y_1,Y_2)=\lim\limits_{z\rightarrow\infty}e^{cz}\omega(Y_1,Y_2)=0.
\end{equation} Now consider $Y_1,Y_2\in E^u(\lambda,z)$. The decay of these solutions at $-\infty$ will be faster than $e^{-cz}$, by (\ref{evals inequality}) and Theorem 3.1 of \cite{Sandstede02}. It follows that \begin{equation}
\Omega(Y_1,Y_2)=\lim\limits_{z\rightarrow-\infty}e^{cz}\omega(Y_1,Y_2)=\lim\limits_{z\rightarrow-\infty}\omega(e^{cz}Y_1,Y_2)=0.
\end{equation} This completes the proof.
\end{proof}

The result of this section is that the stable and unstable bundles define smooth two-parameter curves in $\Lambda(n)$--a lower-dimensional submanifold of $\Gr_n(\bbR^{2n})$. The way to exploit this fact is through the Maslov index, which we discuss in the next section. We close this section by pointing out that the systems we consider are less general than the ``skew-gradient" systems of \cite{Yan02a,Yan02b,CH14}, which allow for a positive, diagonal matrix $D$ to multiply $u_{xx}$ in (\ref{gen pde}). The reason for this is that the proof of Theorem \ref{form invariance thm} breaks down if we include the matrix $D$ due to the presence of the convective term $cu_z$. It is quite interesting that we are free to control the coupling of the terms in the reaction (through the matrix $S$), but that changing the diffusivities of the reagents ruins the symplectic structure.
\section{The Maslov Index}
As mentioned in the introduction, the fundamental group of $\Lambda(n)$ is infinite cyclic for all $n\in\bbN$. The homotopy class of a loop in this space is therefore like a winding number. Intuitively, the duality between winding numbers and intersection numbers should allow us to identify the homotopy class of a loop as an intersection count with a codimension one set in $\Lambda(n)$. This is indeed the case, as was shown by Arnol'd \cite{Arnold67}. In fact, Arnol'd extended this definition to non-closed curves under certain assumptions. These assumptions were relaxed considerably in \cite{RS93}, and the intersection number discussed therein is the Maslov index that we will employ.

To start, fix a Lagrangian plane $V\in\Lambda(n)$ and define the \emph{train} of $V$ to be \begin{equation}\label{train defn}
\Sigma(V)=\{V'\in\Lambda(n):V\cap V'\neq\{0\} \}.
\end{equation} There is a natural partition of this set into submanifolds of $\Lambda(n)$ given by \begin{equation}
\Sigma(V)=\bigcup_{k=1}^n\Sigma_k(V),\hspace{.2 in}\Sigma_k(V)=\{V'\in\Lambda(n):\dim(V\cap V')=k \}.
\end{equation} In particular, the set $\Sigma_1(V)$ is dense in $\Sigma(V)$, and it is a two-sided, codimension one submanifold of $\Lambda(n)$ (cf. \S 2 of \cite{RS93}). In \cite{Arnold67}, the Maslov index of a loop $\alpha$ is defined as the number of signed intersections of $\alpha$  with $\Sigma_1(V)$. A homotopy argument is used to ensure that all intersections with $\Sigma(V)$ are actually with $\Sigma_1(V)$, and hence this definition makes sense. More generally, for a curve $\gamma:[a,b]\rightarrow\Lambda(n)$, it is shown (\S 2.2 of \cite{Arnold67}) that the same index is well-defined, provided that $\gamma(a),\gamma(b)\notin\Sigma(V)$ and that all intersections with $\Sigma(V)$ are one-dimensional and transverse. Both the assumptions of transversality at the endpoints and of only one-dimensional crossings were dispensed of in \cite{RS93}. The key was to make robust the notion of intersections with $\Sigma(V)$, which was accomplished through the introduction of the ``crossing form.''

Now let $\gamma:[a,b]\rightarrow\Lambda(n)$ be a smooth curve. The tangent space to $\Lambda(n)$ at any point $\gamma(t)$ can be identified with the space of quadratic forms on $\gamma(t)$ (cf. \S 1.6 of \cite{Duis}). This allows one to define a quadratic form that determines whether $\gamma(t)$ is transverse to $\Sigma(V)$ at a given intersection; this quadratic form is the crossing form. Specifically, suppose that $\gamma(t^*)\in\Sigma(V)$ for some $t^*\in[a,b]$. It can be checked from (\ref{symplectic form compatible}) that the plane $J\cdot\gamma(t^*)$ is orthogonal to $\gamma(t^*)$, with $J$ as in (\ref{complex structure}). Furthermore, any other Lagrangian plane $W$ transverse to $J\cdot\gamma(t^*)$ can be written uniquely as the graph of a linear operator $B_W:\gamma(t^*)\rightarrow J\cdot\gamma(t^*)$ \cite{Duis}. This includes $\gamma(t)$ for $|t-t^*|<\delta\ll 1.$ Writing $B_{\gamma(t)}=B(t)$, it follows that the curve $v+B(t)v\in\gamma(t)$ for all $v\in\gamma(t^*)$. The crossing form is then defined by \begin{equation}\label{crossing form RS}
\Gamma(\gamma,V,t^*)(v)=\frac{d}{dt}\omega(v,B(t)v)|_{t=t^*}.
\end{equation} The form is defined on the intersection $\gamma(t^*)\cap V$. It is shown in Theorem 1.1 of \cite{RS93} that this definition is independent of the choice $J\cdot\gamma(t^*)$; any other \emph{Lagrangian complement} of $\gamma(t^*)$ would produce the same crossing form. The crossing form is quadratic, so it has a well-defined signature. For a quadratic form $Q$, we use the notation $\mathrm{sign}(Q)$ for its signature. We also write $n_+(Q)$ and $n_-(Q)$ for the positive and negative indices of inertia of $Q$ (see page 187 of \cite{vinberg}), so that \begin{equation}
\mathrm{sign}(Q)=n_+(Q)-n_-(Q).
\end{equation}Roughly speaking, $\mathrm{sign}(\Gamma(\gamma,V,t^*))$ gives the dimension and the direction of the intersection $\gamma(t^*)\cap V$. Reminiscent of Morse theory, a value $t^*$ such that $\gamma(t^*)\cap V\neq\{0\}$ is called a \emph{conjugate point} or \emph{crossing}. A crossing is called \emph{regular} if the associated form $\Gamma$ is nondegenerate. One can then define the Maslov index as follows. \begin{define}\label{Maslov defn num}
Let $\gamma:[a,b]\rightarrow \Lambda(n)$ and $V\in\Lambda(n)$ such that $\gamma(t)$ has only regular crossings with the train of $V$. The \emph{Maslov index} is then given by \begin{equation}\label{Maslov defn}
	\mu(\gamma,V)=-n_-(\Gamma(\gamma,V,a))+\sum\limits_{t^*\in(a,b)}\mathrm{sign}\,\Gamma(\gamma,V,t^*)+n_+(\Gamma(\gamma,V,b)),
	\end{equation} where the sum is taken over all interior conjugate points.
\end{define}
\begin{rem}
	The reader will notice that the Maslov index defined in \cite{RS93} has a different endpoint convention than Definition \ref{Maslov defn num}. Instead, they take $(1/2)\mathrm{sign}(\Gamma)$ as the contribution at both $a$ and $b$. This is merely convention, provided that one is careful to make sure that the additivity property (see Proposition \ref{Maslov props} below) holds. Our convention follows \cite{CJ17,HLS16} to make sure that the Maslov index is always an integer.
\end{rem}
The convention on the endpoints in (\ref{Maslov defn}) serves to ensure that the Maslov index has (among others) the following nice properties from \S 2 of \cite{RS93}.
\begin{prop}\label{Maslov props} 
Let $\gamma:[a,b]\rightarrow\Lambda(n)$ be a curve with only regular crossings. Then \begin{enumerate}[label=(\roman*)]
		\item (Additivity by concatenation) For any $c\in(a,b)$, $\mu(\gamma,V)=\mu(\gamma|_{[a,c]},V)+\mu(\gamma|_{[c,b]},V)$.
		\vspace{.05 in}
		\item (Homotopy invariance) Two paths $\gamma_{1,2}:[a,b]\rightarrow\Lambda(2)$ with $\gamma_1(a)=\gamma_2(a)$ and $\gamma_1(b)=\gamma_2(b)$ are homotopic with fixed endpoints if and only if $\mu(\gamma_1,V)=\mu(\gamma_2,V)$.
		\vspace{.05 in}
		\item If $\dim(\gamma(t)\cap V)= \text{ constant}$, then $\mu(\gamma,V)=0$.
	\end{enumerate}
\end{prop}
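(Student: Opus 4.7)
The plan is to prove each of the three properties separately: part (i) will follow directly from the definition, part (iii) from a local analysis of the crossing form, and part (ii) will require substantial topological input from the Lagrangian Grassmannian.

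For (i), additivity by concatenation, the strategy is to split the defining sum in Definition \ref{Maslov defn num} at the intermediate point $c$. The only non-routine case is when $c$ is itself a crossing: the right-endpoint contribution $+n_+(\Gamma(\gamma,V,c))$ coming from $\mu(\gamma|_{[a,c]},V)$ and the left-endpoint contribution $-n_-(\Gamma(\gamma,V,c))$ coming from $\mu(\gamma|_{[c,b]},V)$ sum to $\mathrm{sign}(\Gamma(\gamma,V,c))$, which is precisely the contribution of $c$ as an interior crossing in $\mu(\gamma,V)$. This is exactly why the integer endpoint convention in Definition \ref{Maslov defn num} was chosen over the half-signature convention of \cite{RS93}.

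For (iii), if $\dim(\gamma(t) \cap V) = 0$ is constant then there are no crossings and $\mu(\gamma,V)=0$ follows from the definition. If the constant dimension is positive, the plan is to show the crossing form itself vanishes identically at every $t$. Given $v \in \gamma(t^*) \cap V$, the constant-rank hypothesis supplies a smooth section $v(t) \in \gamma(t) \cap V$ with $v(t^*)=v$. Writing $\gamma(t)$ as the graph $\{w + B(t)w : w \in \gamma(t^*)\}$ with $B(t^*)=0$, the relation $v(t) = w(t) + B(t)w(t)$ differentiated at $t^*$ gives $B'(t^*)v = v'(t^*) - w'(t^*)$. Since $V$ is a linear subspace containing the whole path $v(t)$, the tangent $v'(t^*)$ lies in $V$, so $\omega(v, B'(t^*)v) = \omega(v,v'(t^*)) - \omega(v,w'(t^*))$ splits into two terms that vanish by Lagrangianness of $V$ and $\gamma(t^*)$ respectively. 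Strictly speaking this regime admits no regular crossings, so to conclude $\mu = 0$ I would combine this vanishing with the homotopy invariance of (ii): a nearby generic perturbation of $\gamma$ with the same endpoints has only regular crossings and must, by continuity of the vanishing analysis above, have index zero.

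For (ii), the main obstacle and the one I would address first is the forward implication, that homotopic implies equal Maslov index. My plan is a generic transversality argument: within the space of fixed-endpoint homotopies $\gamma_s$, a dense open subset has only transverse intersections with $\Sigma_1(V)$ outside of finitely many codimension-two degenerate points in $(s,t)$-space. Around such points a local normal form for the crossing locus shows that crossings are either born or destroyed in pairs of opposite signature, or persist with identical contribution, so the signed count is locally, and hence globally, constant in $s$. The reverse implication is then purely topological: by (i) together with the reversal identity $\mu(\overline{\gamma},V) = -\mu(\gamma,V)$, the loop $\gamma_1 * \overline{\gamma_2}$ has Maslov index $\mu(\gamma_1,V) - \mu(\gamma_2,V)$; invoking the classical fact that the Maslov index realizes the isomorphism $\pi_1(\Lambda(n)) \cong \bbZ$, this integer vanishes if and only if the loop is nullhomotopic, which is precisely what it means for $\gamma_1$ and $\gamma_2$ to be homotopic rel endpoints.
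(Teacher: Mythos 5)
The paper does not actually prove this proposition; it is quoted from \S 2 of \cite{RS93} (with the endpoint convention adjusted following \cite{CJ17,HLS16}), so you are reconstructing arguments the author only cites. Your reconstruction is sound in outline and gets the two genuinely computational points right: the bookkeeping in (i) showing that $+n_+(\Gamma)-n_-(\Gamma)=\mathrm{sign}(\Gamma)$ at an interior splitting point is exactly why the integer convention is additive, and your differentiation of a smooth section $v(t)\in\gamma(t)\cap V$ in (iii), using Lagrangianness of $V$ and of $\gamma(t^*)$ to kill both terms, is the standard verification that the crossing form vanishes identically on a constant-dimensional intersection.

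Two soft spots are worth flagging. First, in (iii) with constant dimension $k>0$, every $t$ is a degenerate crossing, so the curve violates the blanket ``only regular crossings'' hypothesis and Definition \ref{Maslov defn num} does not directly apply; your proposed exit via a generic perturbation plus (ii) is not airtight, since a curve whose crossing form vanishes identically can, under perturbation, acquire crossings of either sign, and ``continuity of the vanishing analysis'' does not by itself force cancellation. The clean route is the one in \cite{RS93}: extend the index to degenerate crossings by homotopy invariance and verify the zero axiom there (or note, as is all the paper needs for $\alpha_3$ and $\alpha_4$, that the relevant applications are the trivial case $k=0$). Second, in the forward direction of (ii) your fold-point analysis handles interior births and deaths, but omits the case in which an interior crossing collides with a fixed endpoint that is itself a crossing (as happens at $(0,\tau)$ in the Maslov box); one must check that the jump in the interior sum is exactly absorbed by the change in $n_-(\Gamma(\cdot,V,a))$ or $n_+(\Gamma(\cdot,V,b))$, which is the other half of what the asymmetric endpoint convention is designed to guarantee. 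The reverse direction of (ii), via the reversal identity and $\pi_1(\Lambda(n))\cong\bbZ$ detected by the Maslov index, is fine.
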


Up until now, we have considered one curve of Lagrangian planes and seen how to count intersections with a fixed reference plane. Alternatively, one could consider two curves of Lagrangian planes and count how many times they intersect each other. This theory is also developed in \cite{RS93}, see \S 3. Suppose then that we have two curves $\gamma_{1,2}:[a,b]\rightarrow\Lambda(n)$. If $\gamma_1(t^*)\cap\gamma_2(t^*)\neq\{0\}$ for some $t^*\in[a,b],$ then we can define the \emph{relative crossing form} \begin{equation}\label{rel crossing form}
\Gamma(\gamma_1,\gamma_2,t^*)=\Gamma(\gamma_1,\gamma_2(t^*),t^*)-\Gamma(\gamma_2,\gamma_1(t^*),t^*)
\end{equation} on the intersection $\gamma_1(t^*)\cap\gamma_2(t^*)$. As before, a crossing is regular if $\Gamma$ in (\ref{rel crossing form}) is nondegenerate. For two curves with only regular crossings, define the \emph{relative Maslov index} to be \begin{equation}\label{rel Maslov index}
\mu(\gamma_1,\gamma_2)=-n_-(\Gamma(\gamma_1,\gamma_2,a))+\sum\limits_{t^*\in(a,b)}\mathrm{sign}\,\Gamma(\gamma_1,\gamma_2,t^*)+n_+(\Gamma(\gamma_1,\gamma_2,b)),
\end{equation} where again the sum is taken over interior intersections of $\gamma_1$ and $\gamma_2$. We point out now that it is easy to show that regular crossings are isolated, so the sums in both (\ref{Maslov defn}) and (\ref{rel Maslov index}) are finite. Also, it is clear from (\ref{rel crossing form}) that (\ref{rel Maslov index}) coincides with Definition \ref{Maslov defn num} in the case where $\gamma_2=$ constant. Accordingly, most of the properties of the Maslov index in \S 2 of \cite{RS93} carry over to the two-curve case without much trouble. However, in moving from paths to pairs of curves, one must be careful about the homotopy axiom. The following is proved in Corollary 3.3 of \cite{RS93}.
\begin{prop}\label{homotopy rel Mas}
	Let $\gamma_1$ and $\gamma_2$ be curves of Lagrangian planes with common domain $[a,b]$. If $\gamma_1(a)\cap\gamma_2(a)=\gamma_1(b)\cap\gamma_2(b)=\{0\}$, then $\mu(\gamma_1,\gamma_2)$ is a homotopy invariant, provided that the homotopy respects the stated condition on the endpoints.
\end{prop}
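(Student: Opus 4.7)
The plan is to reduce the relative Maslov index of a pair of curves to the single-curve Maslov index against a fixed Lagrangian reference plane, for which homotopy invariance is already established in Proposition \ref{Maslov props}(ii). The reduction takes place in the doubled symplectic space $(\bbR^{4n},\tilde\omega)$ with $\tilde\omega=\omega\oplus(-\omega)$. In this space, the diagonal $\Delta=\{(v,v):v\in\bbR^{2n}\}$ is Lagrangian (the two copies of $\omega$ cancel on it), and for any Lagrangian subspaces $L_1,L_2\subset\bbR^{2n}$ the product $L_1\times L_2$ is Lagrangian in $\tilde\omega$. Thus the doubled curve $\tilde\gamma(t):=\gamma_1(t)\times\gamma_2(t)$ lies in $\Lambda(2n)$, and the map $(v,v)\mapsto v$ identifies $\tilde\gamma(t)\cap\Delta$ with $\gamma_1(t)\cap\gamma_2(t)$. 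The endpoint hypothesis on $\gamma_1,\gamma_2$ then translates to $\tilde\gamma(a),\tilde\gamma(b)\notin\Sigma(\Delta)$, so that $\mu(\tilde\gamma,\Delta)$ is well defined in the sense of Definition \ref{Maslov defn num}.

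The next step would be to prove $\mu(\gamma_1,\gamma_2)=\mu(\tilde\gamma,\Delta)$ by matching the crossing forms at each conjugate point. The complex structure compatible with $\tilde\omega$ is $\tilde J=J\oplus(-J)$, so a Lagrangian complement of $\tilde\gamma(t^*)$ is $\tilde J\cdot\tilde\gamma(t^*)=J\gamma_1(t^*)\times J\gamma_2(t^*)$. Writing each $\gamma_i(t)$ near $t^*$ as the graph of $B_i(t):\gamma_i(t^*)\to J\gamma_i(t^*)$ as in (\ref{crossing form RS}), the doubled curve is the graph of $\tilde B(t)(v_1,v_2)=(B_1(t)v_1,B_2(t)v_2)$. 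For $v\in\gamma_1(t^*)\cap\gamma_2(t^*)$ a direct calculation gives
\[
\Gamma(\tilde\gamma,\Delta,t^*)(v,v)=\frac{d}{dt}\tilde\omega\bigl((v,v),\tilde B(t)(v,v)\bigr)\Big|_{t^*}=\frac{d}{dt}\omega(v,B_1(t)v)\Big|_{t^*}-\frac{d}{dt}\omega(v,B_2(t)v)\Big|_{t^*},
\]
which is exactly the relative crossing form (\ref{rel crossing form}). Under the identification $\tilde\gamma(t^*)\cap\Delta\cong\gamma_1(t^*)\cap\gamma_2(t^*)$, the signatures and inertia indices coincide and regular crossings correspond to regular crossings; since the endpoint intersections are trivial by hypothesis, the endpoint contributions in both (\ref{Maslov defn}) and (\ref{rel Maslov index}) vanish, and summing over $(a,b)$ yields $\mu(\tilde\gamma,\Delta)=\mu(\gamma_1,\gamma_2)$.

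Finally, given a homotopy $\{(\gamma_1^s,\gamma_2^s)\}_{s\in[0,1]}$ respecting the endpoint condition for every $s$, the doubled curves $\tilde\gamma^s(t)=\gamma_1^s(t)\times\gamma_2^s(t)$ form a homotopy in $\Lambda(2n)$ with $\tilde\gamma^s(a),\tilde\gamma^s(b)\notin\Sigma(\Delta)$ throughout. Applying Proposition \ref{Maslov props}(ii) against the fixed reference $\Delta$ gives $\mu(\tilde\gamma^0,\Delta)=\mu(\tilde\gamma^1,\Delta)$, and the previous step then yields $\mu(\gamma_1^0,\gamma_2^0)=\mu(\gamma_1^1,\gamma_2^1)$, as required.

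The main obstacle is the crossing-form matching step: one must choose compatible Lagrangian complements in the product and carefully track the sign introduced by the $-\omega$ factor in $\tilde\omega$, which is precisely what produces the minus sign in (\ref{rel crossing form}). Everything else is formal once the two crossing forms are identified and the dictionary $\tilde\gamma\cap\Delta\leftrightarrow\gamma_1\cap\gamma_2$ is in place.
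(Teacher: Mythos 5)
Your argument is correct and is essentially the standard one: the paper itself only cites Corollary 3.3 of Robbin--Salamon for this fact, and their proof proceeds by exactly your reduction, namely passing to the product curve $\gamma_1\times\gamma_2$ against the diagonal $\Delta$ in $(\bbR^{2n}\times\bbR^{2n},\omega\oplus(-\omega))$ and matching crossing forms so that the single-curve homotopy invariance applies. The sign bookkeeping you flag is handled correctly and reproduces the minus sign in (\ref{rel crossing form}).
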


We are now ready to specialize to the problem at hand. Recall that for $\lambda\geq\beta$, $E^u(\lambda,z)$ and $E^s(\lambda,z)$ are both members of $\Lambda(n)$ for all $z\in\bbR$. We will relate the Maslov index to eigenvalues of $L$ by looking for intersections of these subspaces as $z$ and $\lambda$ vary. Thus there are two practical formulations of (\ref{crossing form RS}) for our purposes: one for curves parametrized by $z$ and another for curves parametrized by $\lambda$. For curves parametrized by $z$, we have the following formula, which is proved as Theorem 3 of \cite{CJ17}.

\begin{theorem}\label{crossing form z}
	Consider the curve $z\mapsto E^u(\lambda,z)$, for fixed $\lambda$. Assume that for a reference plane $V$, there exists a value $z=z^*$ such that $E^u(\lambda,z^*)\cap V\neq\{0\}$. Then the crossing form for $E^u(\lambda,\cdot)$ with respect to $V$ is given by \begin{equation}\label{crossing form z formula}
	\Gamma(E^u(\lambda,\cdot),V,z^*)(\zeta)=\omega(\zeta,A(\lambda,z^*)\zeta),
	\end{equation} restricted to the intersection $E^u(\lambda,z^*)\cap V$.
\end{theorem}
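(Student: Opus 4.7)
The plan is to unpack the definition (3.6) of the crossing form directly, using the flow of the linear equation $(2.5)$ to parametrize $E^u(\lambda,z)$ near $z=z^*$ as a graph over its value at $z^*$. Since the crossing form is independent of the Lagrangian complement chosen (as remarked just after (3.6)), I will take the complement to be $W := J\cdot E^u(\lambda,z^*)$, which is Lagrangian because $J$ sends $\omega$-Lagrangian planes to $\omega$-Lagrangian planes. Writing $\gamma(z) := E^u(\lambda,z)$, the goal is to produce an explicit expression for the operator $B(z) : \gamma(z^*) \to W$ whose graph is $\gamma(z)$, differentiate $\omega(v,B(z)v)$ at $z=z^*$, and identify the result with $\omega(v,A(\lambda,z^*)v)$.

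Let $\Phi(z,z^*)$ denote the fundamental solution of $Y'=A(\lambda,z)Y$ normalized by $\Phi(z^*,z^*)=I$. Because $\gamma(z)=\Phi(z,z^*)\gamma(z^*)$, any element of $\gamma(z)$ has the form $\Phi(z,z^*)w$ for some $w\in\gamma(z^*)$. For $z$ close to $z^*$, the projection of $\Phi(z,z^*)|_{\gamma(z^*)}$ onto $\gamma(z^*)$ along $W$ is an isomorphism, so for each $v\in\gamma(z^*)$ there is a unique $w(z)\in\gamma(z^*)$, depending smoothly on $z$ with $w(z^*)=v$, such that
\begin{equation*}
\Phi(z,z^*)w(z) = v + B(z)v,\qquad B(z)v\in W.
\end{equation*}
Because $\gamma(z^*)$ is Lagrangian, $\omega$ vanishes on $\gamma(z^*)\times\gamma(z^*)$, so the contribution of $v$ on the right is annihilated by $\omega(v,\cdot)$, yielding
\begin{equation*}
\omega(v,B(z)v) = \omega\!\left(v,\Phi(z,z^*)w(z)\right).
\end{equation*}

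Differentiating and using $\partial_z\Phi = A(\lambda,z)\Phi$ together with $\Phi(z^*,z^*)=I$ and $w(z^*)=v$ gives
\begin{equation*}
\left.\frac{d}{dz}\omega(v,B(z)v)\right|_{z=z^*} = \omega(v,A(\lambda,z^*)v) + \omega(v,w'(z^*)).
\end{equation*}
Since $w(z)\in\gamma(z^*)$ for all $z$, its derivative $w'(z^*)$ also lies in $\gamma(z^*)$, so the second term vanishes by the Lagrangian property. Restricting the resulting quadratic form to $v\in E^u(\lambda,z^*)\cap V$ yields the claimed formula. The one delicate point, and the main thing to check carefully, is the well-definedness of $w(z)$: this amounts to verifying transversality of $\gamma(z^*)$ and $W=J\cdot\gamma(z^*)$, which follows from the observation (already noted in the text) that $J\cdot\gamma(z^*)$ is the $\langle\cdot,\cdot\rangle$-orthogonal complement of $\gamma(z^*)$ and so meets it only at $0$. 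Everything else reduces to the product rule and the Lagrangian identity $\omega|_{\gamma(z^*)\times\gamma(z^*)}\equiv 0$.
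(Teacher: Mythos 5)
Your proof is correct and complete: the transversality of $\gamma(z^*)$ with $J\cdot\gamma(z^*)$, the graph parametrization of $E^u(\lambda,z)$ via the fundamental solution, and the vanishing of $\omega(v,w'(z^*))$ because both arguments lie in the Lagrangian plane $\gamma(z^*)$ are precisely the points that need checking. The paper itself supplies no proof of this statement (it defers to Theorem 3 of \cite{CJ17}), and your argument is the standard flow-based computation used there, so there is nothing to add.
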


We postpone deriving the $\lambda$-crossing form until \S 5. For now, we return to the motivation of this project, which is to use features of the wave $\hat{u}$ (or $\varphi$) itself to determine its stability. More precisely, we want to associate a Maslov index to $\varphi$ that we can calculate and use to say something about the unstable spectrum of $L$. The Maslov index of a homoclinic orbit was defined in a rigorous way in \cite{CH}. The curve of Lagrangian planes is given by $z\mapsto E^u(0,z)$, which can be thought of as the space of solutions to (\ref{eval eqn matrix}) with $\lambda=0$ satisfying the `left' boundary condition. In the spirit of a shooting argument, the natural choice of reference plane is $V^s(0)$, the stable subspace of the linearization of (\ref{tw ode}) about $0$. However, for technical reasons this is untenable. Indeed, by translation invariance, the derivative of the wave $\vp'(z)\in E^u(0,z).$ Since a homoclinic orbit approaches its end state tangent to the stable manifold, there would be a conjugate point at $+\infty$. Moreover, this conjugate point would be irregular since it is reached in infinite time.

To address this issue, Chen and Hu (\S 1 of \cite{CH}) instead pulled back $V^s(0)$ slightly along $\vp$ and used $E^s(0,\tau)$, $\tau\gg 1$ as a reference plane. The domain of the curve is truncated as well to $(-\infty,\tau]$, which forces a conjugate point at the right end point; $\vp'$ (at least) is in the intersection $E^u(0,\tau)\cap E^s(0,\tau)$. The only requirement on $\tau$ is that \begin{equation}\label{tau req}
V^u(0)\cap E^s(0,z)=\{0\} \text{ for all } z\geq\tau.
\end{equation}  One then arrives at the following definition. \begin{define}\label{Maslov of phi defn num}
	Let $\tau$ satisfy (\ref{tau req}). The \textbf{Maslov index} of $\varphi$ is given by \begin{equation}\label{Maslov phi defn}
	\mathrm{Maslov}(\varphi):=\sum_{z^*\in(-\infty,\tau)}\mathrm{sign}\,\Gamma(E^u,E^s(0,\tau),z^*)+n_+(\Gamma(E^u,E^s(0,\tau),\tau)),
	\end{equation} where the sum is taken over all interior crossings of $E^u(0,z)$ with $\Sigma$, the train of $E^s(0,\tau)$.
\end{define}

It was shown in \cite{CH} that this definition is independent of $\tau$, as long as (\ref{tau req}) is satisfied. This is important, because we will have to revise the value $\tau$ to complete the arguments of the next section.

Although we consider several curves in this work, any mention of \emph{the} Maslov index is referring to $\Maslov$. The value $\lambda=0$ is special because (\ref{eval eqn matrix}) is the equation of variations for (\ref{tw ode}) in that case. Accordingly, one can show (cf. \S 6 of \cite{CJ17}) that $E^u(0,z)$ is tangent to $W^u(0)$ along $\vp$. The Maslov index can therefore be interpreted as the number of twists $W^u(0)$ makes as $\vp$ moves through phase space. In the case $n=1$, $E^u(0)$ is spanned by the velocity to the wave, and conjugate points correspond to zeros of $\vp'(z)$ (albeit with a rotation of the reference plane). In this way, one sees that the Maslov index can be used to derive Sturm-Liouville theory (see also \S 1 of \cite{BCJLMS17}).

\section{The Maslov Box} 
We will see in this section that the set of positive, real eigenvalues of $L$ is bounded above. Since the spectrum of $L$ in $\bbC\setminus H$ consists of isolated eigenvalues of finite multiplicity (cf. page 172 of \cite{AGJ}), it follows that the quantity \begin{equation}\label{Morse-Maslov ineq}
\Mor(L):= \text{ the number of real, positive eigenvalues of } L \text{ counting algebraic multiplicity}
\end{equation} is well defined. The rest of this section is dedicated to proving \begin{theorem}\begin{equation}
	|\Maslov|\leq \Mor(L).
	\end{equation}
\end{theorem}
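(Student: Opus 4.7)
My plan is to view $\Maslov$ as the relative Maslov index along one side of a rectangle in $(\lambda,z)$-parameter space and, via homotopy invariance, equate it (up to sign) with the Maslov index along the opposite side, which counts the real positive eigenvalues of $L$. To set up: fix $\tau$ satisfying (\ref{tau req}), pick $Z \gg 1$, and pick $\Lambda > 0$ large enough that $L$ has no real eigenvalue in $[\Lambda,\infty)$. Such a $\Lambda$ exists by standard second-order ODE estimates using boundedness of $f'(\hat u)$, and together with the discreteness of $\sigma(L)\setminus H$ it also shows that $\Mor(L)<\infty$. On the rectangle $R = [0, \Lambda] \times [-Z,\tau]$ I use the pair of Lagrangian curves $\gamma_1(\lambda, z) = E^u(\lambda, z)$ and $\gamma_2(\lambda, z) = E^s(\lambda, \tau)$, so that on sides where $\lambda$ is constant, $\gamma_2$ is a constant plane.

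\textbf{The four sides.} Traversed counterclockwise, $\partial R$ splits into: a \emph{bottom} side $B$ ($\lambda = 0$, $z \colon -Z \to \tau$), whose relative Maslov index reduces to Definition \ref{Maslov of phi defn num} and, as $Z \to \infty$, contributes $\Maslov$; a \emph{right} side $R_s$ ($z = \tau$, $\lambda \colon 0 \to \Lambda$), where crossings occur exactly at real eigenvalues of $L$ since $E^u(\lambda,\tau) \cap E^s(\lambda,\tau) \neq \{0\}$ is the eigenvalue condition; a \emph{top} side $T$ ($\lambda = \Lambda$, $z \colon \tau \to -Z$); and a \emph{left} side $L_s$ ($z = -Z$, $\lambda \colon \Lambda \to 0$). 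On $L_s$, $E^u(\lambda, -Z)$ is exponentially close to the asymptotic unstable subspace $V^u(\lambda)$ of $A_\infty(\lambda)$ while $E^s(\lambda,\tau)$ is close to $V^s(\lambda)$; by the dichotomy (\ref{evals inequality}) these are transverse for each $\lambda \in [0, \Lambda]$, so $L_s$ contributes nothing. Since $R$ is contractible, both curves extend continuously to all of $R$, so the loop $\partial R \to \Lambda(n) \times \Lambda(n)$ is null-homotopic; by a homotopy argument akin to Proposition \ref{homotopy rel Mas}, the signed sum of the four relative Maslov indices around $\partial R$ vanishes.

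\textbf{Bounding the right side.} At each interior crossing $\lambda_0 \in (0, \Lambda)$ on $R_s$, the relative crossing form lives on $E^u(\lambda_0, \tau) \cap E^s(\lambda_0, \tau)$, whose dimension equals the geometric multiplicity of $\lambda_0$. Since the signature of any quadratic form is bounded by its rank,
\[ |\mathrm{sign}\,\Gamma(\lambda_0)| \leq \dim\bigl(E^u(\lambda_0,\tau) \cap E^s(\lambda_0, \tau)\bigr) = \mathrm{geom\,mult}(\lambda_0) \leq \mathrm{alg\,mult}(\lambda_0). \]
The endpoint contributions at $\lambda = 0$ and $\lambda = \Lambda$ are similarly bounded by the dimensions of the corresponding intersections; the $\lambda = 0$ endpoint requires particular care because $\varphi' \in E^u(0,\tau) \cap E^s(0, \tau)$ forces a crossing there even though $0 \notin (0,\Lambda]$, so the $-n_-$ convention of Definition \ref{Maslov defn num} must be checked to cancel against the matching $+n_+$ contribution at the $z=\tau$ end of $B$. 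Summing then gives $|\mu(R_s)| \leq \sum_{\lambda_0 \in (0,\Lambda]} \mathrm{alg\,mult}(\lambda_0) = \Mor(L)$, and combining with $\Maslov + \mu(R_s) + \mu(T) + \mu(L_s) = 0$ yields $|\Maslov| \leq \Mor(L)$.

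\textbf{Main obstacle.} The hardest step is showing that the top side $T$ has no crossings: $E^u(\Lambda, z) \cap E^s(\Lambda, \tau) = \{0\}$ for every $z \in [-Z, \tau]$ when $\Lambda$ is large. Transversality at the individual points $z = \tau$ (since $\Lambda$ is beyond the real point spectrum) and $z = -Z$ (asymptotic dichotomy) does not by itself preclude crossings in between. The natural tool is the $z$-crossing form from Theorem \ref{crossing form z}, $\Gamma(\zeta) = \omega(\zeta, A(\Lambda, z)\zeta)$: for $\Lambda \gg 1$ the $\lambda S^{-1}$ block in $A$ dominates and, on any candidate intersection, produces a definite contribution to $\omega(\zeta, A\zeta)$, so that all crossings (if any) have the same sign. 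Combined with transversality at $z = \tau$, this monotonicity forces $T$ to have no interior crossings, whence $\mu(T) = 0$. Making this large-$\Lambda$ estimate rigorous, together with the careful bookkeeping at the $\lambda = 0$ corner so that the translational zero eigenvalue is correctly isolated, is where the main technical effort will go.
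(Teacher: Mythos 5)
Your overall architecture -- the Maslov box with four sides, homotopy invariance forcing the signed sum around the boundary to vanish, identifying the bottom side with $\Maslov$ and bounding the right side by $\sum|\mathrm{sign}\,\Gamma|\leq\sum\dim(E^u\cap E^s)\leq \Mor(L)$ -- is exactly the paper's proof, and those parts are sound (modulo the care you already flag about choosing $\tau$ uniformly over $[0,\lambda_{\max}]$, which the paper handles with a compactness argument before fixing $\tau$ once and for all). The genuine gap is in the step you yourself single out as the main obstacle: the top shelf.

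Your proposed resolution via the $z$-crossing form fails for two independent reasons. First, computing $\omega(\zeta,A(\lambda,z)\zeta)$ for $\zeta=(p,q)^T$ gives $\lambda\langle p,QS^{-1}p\rangle-\langle p,f'(\hat u)p\rangle-c\langle p,Qq\rangle-\langle q,QSq\rangle$, and the leading-order term $\lambda\langle p,QS^{-1}p\rangle$ is governed by the matrix $QS^{-1}$, which is an \emph{indefinite} diagonal matrix precisely in the skew-gradient setting ($Q=\diag\{d_i\}$ with mixed signs $d_i=\pm1$); it also vanishes entirely on any intersection vector with $p=0$. So no amount of enlarging $\lambda$ makes the form sign-definite on candidate intersections. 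Second, even if monotonicity held, it would not preclude crossings: it only fixes their sign, so it yields $\mu(T)\geq 0$ (or $\leq 0$), not $\mu(T)=0$; combined with transversality at the endpoints it still permits interior conjugate points in cancelling-free but nonzero number. The paper's argument for this step (Lemma \ref{right shelf lemma}, following \S 4.5 of \cite{HLS16}) is dynamical rather than form-based: for the autonomous limit system on $\Gr_n(\bbR^{2n})$ the plane $U(\lambda)$ is an attracting fixed point, one chooses a small positively invariant ball around it that is disjoint from the closed set $\Sigma(S(\lambda))$, and for $\lambda$ large the nonautonomous flow is a small perturbation, so the curve $z\mapsto E^u(\lambda,z)$ -- which both emanates from and (since $\lambda\notin\sigma(L)$) returns to $U(\lambda)$ -- is trapped in that ball and never meets the train. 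You would need to import this trapping argument (or an equivalent) to close the gap; as written, the top side of your box is not controlled.
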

The strategy of the proof is to consider a contractible loop in $\Lambda(n)\times\Lambda(n)$ (the ``Maslov box'') consisting of four different curve segments. Since the total Maslov index must be zero, Proposition \ref{Maslov props}(i) guarantees that the sum of the constituent Maslov indices is zero. Two of these segments have Maslov index zero, one of them is $\Maslov$, and the final segment is bounded above by $\Mor(L)$. This strategy has its roots in \cite{JLM13,HS16,HLS16}. In particular, \cite{HLS16} coined the term ``Maslov box," and that paper encounters many of the same difficulties that arise when considering homoclinic orbits (i.e. curves on infinite intervals). The difference between this paper and \cite{HLS16} is that the latter considered gradient reaction-diffusion equations. In that case, the linearized operator $L$ is self-adjoint, and the Maslov index is computed using spectral flow of unitary matrices.

From this point forward, we will think of the stable and unstable bundles as curves in $\Lambda(n)$. Likewise, we think of the stable and unstable subspaces of $A_\infty(\lambda)$ as points in $\Gr_n(\bbR^{2n})$. We call these spaces $S(\lambda)$ and $U(\lambda)$ respectively. (In particular, $V^s(0)=S(0)$ and $V^u(0)=U(0)$.) It follows from Lemma 3.2 of \cite{AGJ} that \begin{equation}
\begin{aligned}
\lim\limits_{z\rightarrow-\infty}E^u(\lambda,z)=U(\lambda)\\
\lim\limits_{z\rightarrow\infty}E^s(\lambda,z)=S(\lambda).
\end{aligned}
\end{equation} In light of Theorem \ref{bundles Lag thm}, this actually proves that \begin{equation}
S(\lambda),U(\lambda)\in\Lambda(n),
\end{equation} since $\Lambda(n)$ is a closed submanifold of $\Gr_n(\bbR^{2n})$. In what follows, it will be important to know what happens to $E^u(\lambda,z)$ as $z\rightarrow\infty$. First, if $\lambda\in\sigma(L)$, then $E^u(\lambda,z)\cap E^s(\lambda,z)\neq\{0\}$, so it must be the case that \begin{equation}\label{+ limit eval}
\lim\limits_{z\rightarrow\infty} E^u(\lambda,z)\in \Sigma(S(\lambda)),
\end{equation} the train of $S(\lambda)$. On the other hand, if $\lambda\notin\sigma(L)$, then any solution of (\ref{eval eqn matrix}) is unbounded at $+\infty$, and it is proved in Lemma 3.7 of \cite{AGJ} that \begin{equation}\label{+ limit no eval}
\lim\limits_{z\rightarrow\infty}E^u(\lambda,z)=U(\lambda).
\end{equation}
There are a few facts to be gleaned from this observation. First, if $\lambda\notin\sigma(L)$, then $z\mapsto E^u(\lambda,z)$ (with domain $\bbR$) forms a loop in $\Lambda(n)$, in which case the Maslov index is independent of the choice of reference plane (\S 1.5 of \cite{Arnold67}). This fact was used in \cite{Jo88}, which is the first appearance of the Maslov index for solitary waves (known to the author). Also, it follows from (\ref{+ limit eval}) and (\ref{+ limit no eval}) that \begin{equation}
\lim\limits_{z\rightarrow\infty} E^u(\lambda,z)
\end{equation} is discontinuous in $\lambda$ at each eigenvalue of $L$. Indeed, $U(\lambda)$ is bounded away from $\Sigma(S(\lambda))$, since $\bbR^{2n}=S(\lambda)\oplus U(\lambda)$. This is the motivation for using the cutoff $x_\infty$ (or $\tau$ in this paper) for the unstable bundle in \cite{HLS16}, since the homotopy argument requires a continuous curve. We also have the additional motivation for the cutoff of using $\Maslov$ explicitly. 

Proposition 2.2 of \cite{AGJ} guarantees that one can draw a simple, closed curve in $\bbC$ containing $\sigma(L)\cap(\bbC\setminus H)$ in its interior. An obvious consequence of this is that the real, unstable spectrum of $L$ is bounded above by a constant $M$. We will make use of the following, slightly stronger fact.
\begin{lemma}\label{right shelf lemma}
	 There exists $\lambda_\mathrm{max}>M$ such that, for all $z\in\bbR$, \begin{equation} 
	E^u(\lambda_\mathrm{max},z)\cap S(\lambda_\mathrm{max})=\{0\}.
	\end{equation}
\end{lemma}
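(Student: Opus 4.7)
First, I observe that for every $\lambda>M$, any crossing $E^u(\lambda,z^*) \cap S(\lambda) \neq \{0\}$ must lie in a bounded $z$-interval. By Proposition 2.2 of \cite{AGJ}, such $\lambda$ is outside $\sigma(L)$, so (\ref{+ limit no eval}) gives $\lim_{z\to+\infty} E^u(\lambda,z) = U(\lambda)$; the corresponding limit at $-\infty$ is $U(\lambda)$ by construction. Since $\bbR^{2n} = S(\lambda) \oplus U(\lambda)$ by (\ref{evals inequality}), both endpoint limits are transverse to $S(\lambda)$, so the real issue is to exclude interior crossings once $\lambda$ is made large enough.

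The plan is then to exploit the large-$\lambda$ asymptotics of (\ref{eval eqn matrix}) via a rescaling. Setting $\tilde{q} = q/\sqrt{\lambda}$ and introducing the fast time $\tau = \sqrt{\lambda}\,z$ recasts the eigenvalue system as $\frac{d}{d\tau} Y = \hat{A}_\infty Y + R(\lambda,\tau) Y$, where $\hat{A}_\infty = \begin{pmatrix} 0 & S \\ S^{-1} & 0 \end{pmatrix}$ has eigenvalues $\pm 1$ and explicitly computable stable and unstable subspaces $\hat{S}=\{(p,-S^{-1}p):p\in\bbR^n\}$ and $\hat{U}=\{(p,S^{-1}p):p\in\bbR^n\}$ that are obviously transverse, while the remainder satisfies $\|R(\lambda,\tau)\| = O(\lambda^{-1/2})$ uniformly in $\tau \in \bbR$ (using that the inhomogeneity involves $f'(\hat{u}(\tau/\sqrt{\lambda}))-f'(0)$, which is bounded uniformly and in fact decays exponentially in $|\tau|$). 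Under the same rescaling, the endpoint subspaces $U(\lambda)$ and $S(\lambda)$ converge to $\hat{U}$ and $\hat{S}$ in $\Gr_n(\bbR^{2n})$ as $\lambda \to \infty$.

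Next, I would apply the roughness of exponential dichotomies (Theorem 3.2 of \cite{Sandstede02}). Since $\hat{A}_\infty$ has no spectrum on the imaginary axis, for $\lambda$ sufficiently large the rescaled system inherits an exponential dichotomy on all of $\bbR$ whose unstable fiber at each $\tau$ is $O(\lambda^{-1/2})$-close to $\hat{U}$, uniformly in $\tau$. Pulling this back to the original variables identifies this fiber with $E^u(\lambda,z)$ and gives a uniform Grassmannian estimate. Since $\hat{U}$ and $\hat{S}$ are transverse with a fixed positive principal angle, choosing $\lambda_\mathrm{max}$ large enough keeps $E^u(\lambda_\mathrm{max},z)$ (clustered near $\hat{U}$) and $S(\lambda_\mathrm{max})$ (near $\hat{S}$) in disjoint neighborhoods, forcing $E^u(\lambda_\mathrm{max},z) \cap S(\lambda_\mathrm{max}) = \{0\}$ for every $z \in \bbR$.

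The main obstacle is obtaining uniformity in $z$ rather than only on compact intervals. Without the rescaling, both $U(\lambda)$ and $S(\lambda)$ collapse together toward the vertical subspace $\{0\} \oplus \bbR^n$ as $\lambda \to \infty$, so merely knowing that $E^u(\lambda,z)$ is close to $U(\lambda)$ in the unscaled Grassmannian does not automatically separate it from $S(\lambda)$. The substitution $\tilde q = q/\sqrt{\lambda}$ normalizes this degeneration, and is what allows the roughness-based perturbation argument to yield a genuinely global (in $z$) transversality statement.
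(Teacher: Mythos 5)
Your argument is correct, but it takes a genuinely different route from the paper. The paper's proof (deferred to \S 4.5 of \cite{HLS16}) works directly with the induced flow on $\Gr_n(\bbR^{2n})$: for the autonomous limit system $Y'=A_\infty(\lambda)Y$ the point $U(\lambda)$ is an attracting fixed point of the Grassmannian flow, so one chooses a small ball $B$ around $U(\lambda)$ in $\Lambda(n)$, disjoint from the closed set $\Sigma(S(\lambda))$, on whose boundary the vector field points inward; for $\lambda$ large the nonautonomous terms are relatively small, $B$ remains positively invariant, and since $z\mapsto E^u(\lambda,z)$ emanates from $U(\lambda)$ it is trapped in $B$ and never meets $\Sigma(S(\lambda))$. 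You instead normalize the large-$\lambda$ degeneration by the rescaling $\tilde q=q/\sqrt{\lambda}$, $\tau=\sqrt{\lambda}\,z$, reduce to a uniformly $O(\lambda^{-1/2})$ perturbation of a fixed hyperbolic constant-coefficient system with manifestly transverse subspaces $\hat U$ and $\hat S$, and invoke roughness of exponential dichotomies on all of $\bbR$ to pin $E^u(\lambda,z)$ uniformly near $\hat U$ while $S(\lambda)$ sits near $\hat S$. The two arguments rest on the same underlying fact (near-autonomy for large $\lambda$), but yours trades the dynamical trapping-region picture for a functional-analytic perturbation theorem, and it has the virtue of making explicit the uniformity issue that the paper's sketch glosses over: in the unscaled Grassmannian both $U(\lambda)$ and $S(\lambda)$ collapse onto $\{0\}\oplus\bbR^n$ as $\lambda\to\infty$, so the admissible ball $B$ shrinks with $\lambda$ and one must check that the perturbation shrinks faster; your rescaling resolves this cleanly since $\hat U$ and $\hat S$ keep a fixed positive angle. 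Two small remarks: the remainder $R(\lambda,\tau)$ contains the nondecaying terms $-c\lambda^{-1/2}I$ and $-\lambda^{-1}Qf'(0)$ in addition to the exponentially localized piece, so it is uniformly $O(\lambda^{-1/2})$ but does not decay in $\tau$ (this is all you actually use); and the identification of the unstable fiber of the full-line dichotomy with $E^u(\lambda,z)$ deserves the one-line justification that both consist precisely of solutions decaying as $z\to-\infty$.
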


This is proved in \S 4.5 of \cite{HLS16}, and we refer the reader there for the details. We will instead outline the basic idea, which is straightforward. System (\ref{eval eqn matrix}) is a perturbation of the autonomous system $Y'(z)=A_\infty(\lambda)Y(z)$, which also induces an equation on $\Gr_n(\bbR^{2n})$. In this latter system, $U(\lambda)$ is an attracting fixed point, as is observed in the proof of Lemma 3.7 in \cite{AGJ}. We can therefore find a small ball $B$ around $U(\lambda)$ in $\Lambda(n)$ on the boundary of which the vector field points inward. Furthermore, this ball can be taken small enough to be disjoint from $\Sigma(S(\lambda))$, which is itself closed in $\Lambda(n)$. For large enough $\lambda$, (\ref{eval eqn matrix}) is essentially autonomous, so $B$ will still be positively invariant. Finally, since any $\lambda>M$ is not an eigenvalue of $L$, the curve $z\mapsto E^u(\lambda,z)$ will both emanate from and return to $U(\lambda)$. It will therefore be trapped in the ball $B$, and hence there will be no intersections with $S(\lambda)$.

Now fix the value $\lambda_\mathrm{max}$ guaranteed by the preceding lemma. The immediate goal is to set, once and for all, the value $\tau$ appearing in Definition \ref{Maslov defn num}. Since $E^s(\lambda_\mathrm{max},z)\rightarrow S(\lambda_\mathrm{max})$ as $z\rightarrow\infty$, it follows from Lemma \ref{right shelf lemma} that we can find a value $z=\tau_\mathrm{max}$ such that \begin{equation}
E^u(\lambda_\mathrm{max},z)\cap E^s(\lambda_\mathrm{max},\zeta)=\{0\}, \hspace{.1 in} \text{ for all } z\in\bbR \text{ and for all }\zeta\geq\tau_\mathrm{max}.
\end{equation} Similarly, for each $\lambda\in[0,\lambda_\mathrm{max}]$ we can find $\tau_\lambda$ and an open interval $I_\lambda$ containing $\lambda$ such that \begin{equation}
U(\lambda)\cap E^s(\lambda,z)=\{0\},\hspace{.1 in} \text{ for all } z\geq\tau_\lambda, \lambda\in I_\lambda.
\end{equation} (For $\lambda=\lambda_\mathrm{max}$, the value $\tau_\mathrm{max}$ defined above works just fine.) Extracting a finite subcover $\cup_{k=1}^NI_{\lambda_k}$ of $[0,\lambda_\mathrm{max}]$, we set \begin{equation}\label{tau defn}
\tau=\max\{\tau_{\lambda_1},\dots,\tau_{\lambda_k}\}.
\end{equation} The preceding can be summarized in the following proposition.
\begin{prop}\label{shelves prop}
	With $\tau$ given by (\ref{tau defn}), the following are true. \begin{enumerate}[label=(\roman*)]
		\item $E^u(\lambda_\mathrm{max},z)\cap E^s(\lambda_\mathrm{max},\tau)=\{0\}$ for all $z\in(-\infty,\tau]$.
		\vspace{.05 in}
		\item $U(\lambda)\cap E^s(\lambda,\tau)=\{0\}$ for all $\lambda\in[0,\lambda_\mathrm{max}].$
	\end{enumerate}
\end{prop}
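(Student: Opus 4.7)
The strategy is to make rigorous the informal construction immediately preceding the statement, which already sketches the essential moves. Three ingredients drive the argument: (a) transversality in $\Gr_n(\bbR^{2n})$ is an open condition; (b) the maps $\lambda\mapsto U(\lambda)$, $\lambda\mapsto S(\lambda)$, and $(\lambda,z)\mapsto E^{s/u}(\lambda,z)$ depend continuously on their arguments, together with $E^s(\lambda,z)\to S(\lambda)$ as $z\to\infty$; and (c) compactness, applied to the parameter interval $[0,\lambda_\mathrm{max}]$ for part (ii) and to the image of the curve $z\mapsto E^u(\lambda_\mathrm{max},z)$ for part (i).

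For (ii), fix $\lambda\in[0,\lambda_\mathrm{max}]$. By (\ref{evals inequality}) the limit matrix $A_\infty(\lambda)$ is hyperbolic, so $U(\lambda)\oplus S(\lambda)=\bbR^{2n}$, which means $U(\lambda)\notin\Sigma(S(\lambda))$. Openness of transversality, joint continuity in $(\lambda',z)$, and $E^s(\lambda',z)\to S(\lambda')$ together yield an open neighborhood $I_\lambda\ni\lambda$ in $[0,\lambda_\mathrm{max}]$ and a threshold $\tau_\lambda>0$ for which $U(\lambda')\cap E^s(\lambda',z)=\{0\}$ whenever $\lambda'\in I_\lambda$ and $z\geq\tau_\lambda$. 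Extract a finite subcover $\{I_{\lambda_k}\}_{k=1}^N$ using compactness of $[0,\lambda_\mathrm{max}]$, and define $\tau:=\max_k\tau_{\lambda_k}$ as in (\ref{tau defn}); since raising the cutoff preserves each $I_{\lambda_k}$'s transversality condition, (ii) follows at once.

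For (i), Lemma \ref{right shelf lemma} provides the \emph{pointwise} transversality $E^u(\lambda_\mathrm{max},z)\cap S(\lambda_\mathrm{max})=\{0\}$ for every $z\in\bbR$. Since $\lambda_\mathrm{max}>M$ is not an eigenvalue, (\ref{+ limit no eval}) gives $E^u(\lambda_\mathrm{max},z)\to U(\lambda_\mathrm{max})$ as $z\to\pm\infty$, and $U(\lambda_\mathrm{max})$ is itself transverse to $S(\lambda_\mathrm{max})$. Hence
\[
K:=\{E^u(\lambda_\mathrm{max},z):z\in\bbR\}\cup\{U(\lambda_\mathrm{max})\}\subset\Lambda(n)
\]
is the continuous image of a two-point compactification of $\bbR$ and is therefore compact, and it is disjoint from the closed train $\Sigma(S(\lambda_\mathrm{max}))$. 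A standard principal-angle argument shows that the set of Lagrangian planes transverse to every element of a compact subset of $\Lambda(n)$ is open; this open set contains $S(\lambda_\mathrm{max})$, so the convergence $E^s(\lambda_\mathrm{max},\zeta)\to S(\lambda_\mathrm{max})$ yields a $\tau_\mathrm{max}$ with $E^s(\lambda_\mathrm{max},\zeta)\cap V=\{0\}$ for every $V\in K$ and every $\zeta\geq\tau_\mathrm{max}$. Enlarging $\tau$ if necessary so that $\tau\geq\tau_\mathrm{max}$ (this only strengthens (ii)) delivers (i).

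The only step beyond direct bookkeeping is the uniform-in-$z$ transversality needed for (i): promoting the pointwise statement of Lemma \ref{right shelf lemma} to an open neighborhood of $S(\lambda_\mathrm{max})$ that is simultaneously transverse to \emph{every} plane along the whole $\lambda_\mathrm{max}$-orbit. Everything there rests on the compactness of $K$, which is itself inherited from the trapping in the proof of Lemma \ref{right shelf lemma}: because $z\mapsto E^u(\lambda_\mathrm{max},z)$ is confined to a small ball around $U(\lambda_\mathrm{max})$ and approaches that attracting fixed point at both $\pm\infty$, the extension to the two-point compactification is continuous and its image is compact.
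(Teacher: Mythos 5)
Your proof is correct and takes essentially the same route as the paper, whose argument is the construction of $\tau$ in the paragraph preceding the proposition: a finite-subcover/openness-of-transversality argument over $[0,\lambda_\mathrm{max}]$ for (ii), and Lemma \ref{right shelf lemma} combined with $E^s(\lambda_\mathrm{max},\zeta)\to S(\lambda_\mathrm{max})$ for (i). The one place you go beyond the paper is in making explicit, via compactness of the closed-up orbit $\{E^u(\lambda_\mathrm{max},z)\}\cup\{U(\lambda_\mathrm{max})\}$ in $\Lambda(n)$, the uniform-in-$z$ transversality that the paper simply asserts follows from the lemma; that is a useful clarification rather than a different argument.
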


Now consider the rectangle \begin{equation}\label{rectangle}
Q=[0,\lambda_\mathrm{max}]\times[-\infty,\tau].
\end{equation} $Q$ is mapped into $\Lambda(n)\times\Lambda(n)$ by the function \begin{equation}\label{G defn}
G(\lambda,z)=(E^u(\lambda,z),E^s(\lambda,\tau)),
\end{equation} where $G(\lambda,-\infty)$ is defined to be $(U(\lambda),E^s(\lambda,\tau))$. Notice that $G$ is continuous, see \S 3 of \cite{AGJ}. Since $Q$ is contractible, the image $G(Q)\subset\Lambda(2)\times\Lambda(2)$ is contractible as well. Let $F:Q\times[0,1]\rightarrow Q$ be a deformation retract (page 361 of \cite{Munkres}) of $Q$ onto the point $(0,-\infty)$. Composing $F$ and $G$ then gives a deformation retract of $G(Q)$ onto $G(0,-\infty)=(U(0),E^s(0,\tau))$. In particular, we see that the image of the boundary $\p Q$ (with a counterclockwise orientation) under $G$ is homotopic with fixed endpoints to the constant path $(U(0),E^s(0,\tau))$. We will call this (closed) boundary curve $\alpha$. Since $U(0)\cap E^s(0,\tau)=\{0\}$ by Proposition \ref{shelves prop}(ii), we see that Proposition \ref{homotopy rel Mas} applies, so \begin{equation}
\mu(\alpha)=\mu(U(0),E^s(0,\tau))=0.
\end{equation} The Maslov index in this case is for pairs of Lagrangian planes, since $\alpha\subset\Lambda(n)\times\Lambda(n)$. We can describe the loop $\alpha$ as the concatenation of four curve segments. (See Figure \ref{fig1} below.) Define:
 \begin{equation}\label{alpha defn}
\begin{aligned}
\alpha_1 & = (E^u(0,z),E^s(0,\tau)), \hspace{.1 in} z\in[-\infty,\tau]\\
\alpha_2 & = (E^u(\lambda,\tau),E^s(\lambda,\tau)), \hspace{.1 in} \lambda\in[0,\lambda_\mathrm{max}]\\
\alpha_3 & = (E^u(\lambda_\mathrm{max},-z),E^s(\lambda_\mathrm{max},\tau)), \hspace{.1 in} z\in[-\tau,\infty] \\
\alpha_4 & =(U(\lambda_\mathrm{max}-\lambda),E^s(\lambda_\mathrm{max}-\lambda,\tau)), \hspace{.1 in} \lambda\in[0,\lambda_\mathrm{max}].
\end{aligned}
\end{equation}

Using the notation of \cite{Munkres}, page 326, it is clear that $\alpha=\alpha_1*\alpha_2*\alpha_3*\alpha_4$. As explained above, $\mu(\alpha)=0$, since $G(Q)$ is contractible. Proposition \ref{Maslov props}(i) then asserts that \begin{equation}\label{additivity equation}
0=\mu(\alpha)=\mu(\alpha_1)+\mu(\alpha_2)+\mu(\alpha_3)+\mu(\alpha_4).
\end{equation}
It is a direct consequence of Proposition \ref{shelves prop}(i) that $\mu(\alpha_3)=0$, since there are no conjugate points. Likewise, Proposition \ref{shelves prop}(ii) says that $\mu(\alpha_4)=0$. Comparing (\ref{alpha defn}) with Definition \ref{Maslov defn num}, we see that \begin{equation}
\mu(\alpha_1)=\Maslov.
\end{equation} Taken together with (\ref{additivity equation}), these observations show that \begin{equation}
|\Maslov|=|\mu(\alpha_2)|.
\end{equation} To prove Theorem \ref{Morse-Maslov ineq}, it therefore suffices to show that \begin{equation}
|\mu(\alpha_2)|\leq\Mor(L).
\end{equation} 
\begin{center}
	\begin{figure}[h]
		\includegraphics[scale=1.25]{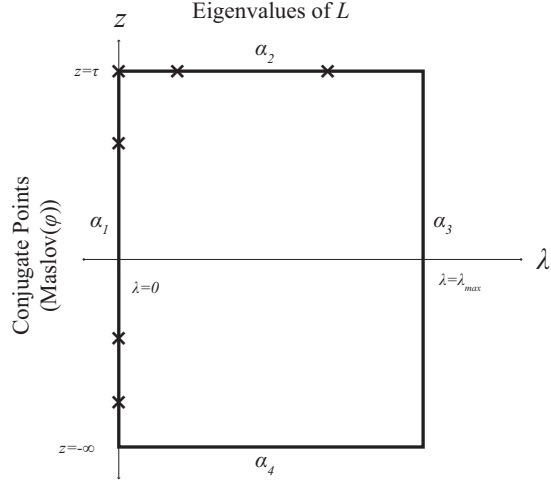}
		
		\caption{``Maslov Box": Domain in $\lambda z$-plane}
		
		\label{fig1}
	\end{figure}
\end{center} 
\begin{rem}
	Notice in Figure \ref{fig1} the conjugate point in the upper left corner. This crossing corresponds to the translation invariance of (\ref{tw pde}) (i.e. $E^u(0,\tau)\cap E^s(0,\tau)\neq\{0\}$). The contributions of this crossing to $\mu(\alpha_1)$ and $\mu(\alpha_2)$ can be determined using (\ref{rel Maslov index}).
\end{rem}Suppose that $\lambda^*$ is a conjugate point for $\alpha_2$. By definition, this means that \begin{equation}\label{alpha 2 crossing}
E^u(\lambda^*,\tau)\cap E^s(\lambda^*,\tau)\neq\{0\}.
\end{equation} But this is precisely the condition that $\lambda$ be an eigenvalue of $L$. Furthermore, the dimension of the intersection in (\ref{alpha 2 crossing}) captures the geometric multiplicity of $\lambda^*$ as an eigenvalue. By the triangle inequality, we therefore have \begin{equation}
|\mu(\alpha_2)|\leq\sum_{\lambda^*\in[0,\lambda_{\mathrm{max}}]}\dim(E^u(\lambda^*,\tau)\cap E^s(\lambda^*,\tau)),
\end{equation} where the sum is taken over all conjugate points. Since $[0,\lambda_\mathrm{max}]$ contains all possible real, unstable eigenvalues of $L$, and the geometric multiplicity of an eigenvalue is no greater than its algebraic multiplicity, we see that $|\mu(\alpha_2)|\leq\Mor(L)$, proving Theorem \ref{Morse-Maslov ineq}.

\section{Counting Eigenvalues in a FitzHugh-Nagumo System} 
There are two reasons that the inequality in Theorem \ref{Morse-Maslov ineq} cannot be improved to equality in general. First, the Maslov index counts \emph{signed} intersections, so that two different eigenvalues of $L$ might offset in the calculation of $\mu(\alpha_2)$ if the crossing forms have different signatures. Second, a given eigenvalue might be \emph{deficient} (i.e. have lesser geometric than algebraic multiplicity). In this and the next section, we consider traveling waves in a FitzHugh-Nagumo system wherein neither of these potential pitfalls occurs. Additionally, we prove that any unstable spectrum must be real, so that the Maslov index actually counts the total number of unstable eigenvalues.

The FitzHugh-Nagumo system is given by \begin{equation}\label{fhn pde}
\begin{aligned}
u_t & = u_{xx}+g(u)-v\\
v_t & = dv_{xx}+\eps(u-\gamma v),
\end{aligned}
\end{equation} where $g(u)=u(1-u)(u-a)$, $0<a<1/2$ and $\eps,\gamma>0$. Typically, $\eps$ is taken to be very small, and (\ref{fhn pde}) is studied using techniques of singular perturbation theory. The stability of various traveling and standing fronts and pulses has been studied for the variation of (\ref{fhn pde}) in which either $d=0$ or $0<d\ll 1$ \cite{Jones84,Yan85,Flo91,AGJ,Yana89}. If $d=1$, one checks that (\ref{fhn pde}) is of the form (\ref{gen pde}), with \begin{equation}
f(u)=\left(\begin{array}{c}
g(u)-v\\
-u+\gamma v
\end{array}\right), \hspace{.05 in} Q=\left(\begin{array}{c c }
1 & 0\\0 & -1
\end{array}\right), \hspace{.05 in} S=\left(\begin{array}{c c }
1 & 0 \\ 0 & \eps
\end{array}\right).
\end{equation}
The case $d=O(1)$ is considered in \cite{CH}, in which it is shown that standing waves for (\ref{fhn pde}) are stable. In \cite{CC15}, the authors use variational techniques to prove that traveling waves exist as well, but the stability question remains open. For $d=1$, the same traveling waves are constructed in \cite{CJ17b} using geometric singular perturbation theory. (See also \S 6 of \cite{CJ17}.) It falls out of this construction that the wave $\varphi=(\hat{u},\hat{v})$ moves to the left (i.e. $c<0$) and is homoclinic to $0$ as an orbit $(\hat{u},\hat{v},\hat{u}',\hat{v}'/\eps)$ in four-dimensional phase space. In an abuse of notation, we will use $\vp$ for both the solution $\vp=(\hat{u},\hat{v})$ of (\ref{fhn pde}) and the corresponding homoclinic orbit in phase space. In what follows, we show how the Maslov index provides the framework for showing that these waves are stable. The subsequent calculation of $\Maslov$, which completes the stability proof, is the topic of \cite{CJ17b}. To start, note that we are concerned with the spectrum of  \begin{equation}\label{fhn operator}
L_\eps=\p_z^2+c\p_z+\left(\begin{array}{c c}
g'(\hat{u}) & -1\\
\eps & -\eps\gamma
\end{array}\right),
\end{equation} acting on $BU(\bbR,\bbR^2)$. The subscript $\eps$ serves both to remind the reader that the operator is $\eps$-dependent and to distinguish results that are general for (\ref{gen pde}) from those that are specific to (\ref{fhn pde}). To apply the methods of this paper, we must verify that condition (\ref{Turing assumption}) is met. Indeed, a simple calculation gives that $g'(0)=-a$ and the eigenvalues of $QSf'(0)$ are \begin{equation}\label{fhn nonlinearity evals}
\nu_i=\frac{-(a+\eps\gamma)\pm\sqrt{(a+\eps\gamma)^2-4\eps(a\gamma+1)}}{2}.
\end{equation} For $\eps$ sufficiently small, the $\nu_i$ are easily seen to be negative and distinct. It then follows from (\ref{evals of A(lambda)}) that the eigenvalues of $A_\infty(\lambda)$ for $\lambda\geq 0$ are given by \begin{equation}\label{evals of A(lambda) fhn}
\mu_1(\lambda)<\mu_2(\lambda)<0<-c<\mu_3(\lambda)<\mu_4(\lambda).
\end{equation}
The benefit of having simple eigenvalues is that we can give analytically varying bases of $E^s(\lambda,z)$ and $E^u(\lambda,z)$ that separate solutions with different growth rates, see \cite{CJ17} for details. This will be important in \S 6 when we discuss the symplectic Evans function.

We now proceed to show that any unstable eigenvalues of $L_\eps$ must be real. After that, we address the issue of direction of crossings by deriving the $\lambda$ crossing form and showing that it is positive definite at all conjugate points. Finally, in \S 6 we show that the algebraic and geometric multiplicities of any unstable eigenvalues of $L_\eps$ are the same. This will prove:

\begin{theorem}\label{Morse = Maslov thm}
	\begin{equation}
	\Maslov=\Mor(L_\eps)=|\sigma(L_\eps)\cap\{\lambda\in\bbC:\mathrm{Re}\,\lambda\geq 0\}|.
	\end{equation}
\end{theorem}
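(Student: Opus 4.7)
The plan is to sharpen Theorem \ref{Morse-Maslov ineq} into an equality and to show that no unstable eigenvalues are missed by the Maslov index. Applying the Maslov box of \S 4 verbatim (Proposition \ref{shelves prop} holds in the general setting) gives $\Maslov = -\mu(\alpha_2)$ together with the bound $|\mu(\alpha_2)| \leq \Mor(L_\eps)$. The inequality is sharp provided (a) every $\lambda$-crossing along $\alpha_2$ contributes with the same sign, and (b) every real unstable eigenvalue has algebraic multiplicity equal to its geometric multiplicity. The remaining identity $\Mor(L_\eps) = |\sigma(L_\eps)\cap\{\mathrm{Re}\,\lambda \geq 0\}|$ then requires the separate analytic fact (c) that every unstable eigenvalue of $L_\eps$ is real.

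For (a), I would derive the $\lambda$-analogue of Theorem \ref{crossing form z} using the relative crossing form in \eqref{rel crossing form}. Since $\alpha_2$ is a path of \emph{pairs} of Lagrangian planes, I would choose analytic bases of $E^u(\lambda,\tau)$ and $E^s(\lambda,\tau)$ — available because the spatial eigenvalues in \eqref{evals of A(lambda) fhn} are simple in the fast FHN regime — and differentiate the symplectic form in $\lambda$. The computation uses $\partial_\lambda A(\lambda,z) = \left(\begin{smallmatrix} 0 & 0 \\ S^{-1} & 0 \end{smallmatrix}\right)$ together with an integration by parts weighted by $e^{cz}$, exploiting the conserved form $\Omega$ from Theorem \ref{form invariance thm}. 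I expect the crossing form at a regular crossing to collapse to a global integral essentially of the form $\int_\bbR e^{cz} |p|^2\,dz$ evaluated on the corresponding eigenfunction $(p,q)$; the FHN signs $Q = \diag(1,-1)$ make this integrand strictly positive, so every crossing is regular, all crossings contribute with the same sign, and $|\mu(\alpha_2)|$ equals the sum of geometric multiplicities of the real positive eigenvalues.

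For (b), I would use the symplectic Evans function $D(\lambda)$ developed in \S 6, whose order of vanishing at $\lambda^*$ equals the algebraic multiplicity of $\lambda^*$. The key lemma to establish is that nondegeneracy of the relative $\lambda$-crossing form at $\lambda^*$ is equivalent to $D'(\lambda^*)\neq 0$; the definiteness proved in step (a) then forces simplicity of every unstable eigenvalue, and (a)+(b) jointly yield $\Maslov = \Mor(L_\eps)$.

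Step (c) is the main obstacle and will require a genuinely analytic argument outside of the Maslov framework. My approach would be to eliminate the $q$-component of the eigenvalue system \eqref{eval eqn matrix} by inverting $\p_z^2 + c\p_z - \eps\gamma - \lambda$ on $H^1(\bbR,\bbC)$ — this operator is invertible for $\mathrm{Re}\,\lambda \geq 0$ by \eqref{Turing assumption} and \eqref{fhn nonlinearity evals} — reducing the problem to a nonlocal scalar eigenvalue equation for $p$. Pairing against $e^{cz}\bar p$ to symmetrize $\p_z^2 + c\p_z$ and taking imaginary parts, I would look to extract a factor of $\mathrm{Im}\,\lambda$ multiplying a strictly positive quantity, forcing $\mathrm{Im}\,\lambda = 0$. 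The delicate point is that the nonlocal resolvent entangles $\mathrm{Re}\,\lambda$ and $\mathrm{Im}\,\lambda$ nontrivially, so a careful energy estimate — plausibly made uniform in the singular-perturbation parameter $\eps$ so that one can exploit the slow/fast structure of the wave itself — will be needed to isolate the correct sign and rule out complex conjugate pairs of unstable eigenvalues.
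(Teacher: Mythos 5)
Your overall architecture matches the paper's: (a) monotonicity of the $\lambda$-crossings, (b) algebraic $=$ geometric multiplicity via the symplectic Evans function, and (c) realness of unstable spectrum by eliminating $q$ and taking imaginary parts. Steps (b) and (c) are essentially the paper's arguments in sketch form (for (c) the paper conjugates by $e^{cz/2}$ to form $L_c=e^{cz/2}L_\eps e^{-cz/2}$, which is your ``pairing against $e^{cz}\bar p$'' symmetrization; the rescaling $\tilde q=q/\sqrt{\eps}$ then makes the off-diagonal coupling skew-symmetric so that only the resolvent term survives in the imaginary part, and negativity of $L_q-a$ for $\mathrm{Re}\,\lambda\geq -c^2/8$ plus smallness of $\eps$ forces $\mathrm{Im}\,\lambda=0$). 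In (b) note that a two-dimensional crossing requires showing $D''(\lambda^*)\neq 0$ and identifying the Hessian of $D$ with the $2\times 2$ matrix of the crossing form, not just the $D'(\lambda^*)\neq 0$ statement you give for the simple case.

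The genuine gap is in step (a). When you compute $\omega(P,\p_\lambda A\,P)$ with $Q=\diag(1,-1)$ and $P=(p,q,p_z,q_z/\eps)$, the integrand of the crossing form is
\begin{equation}
\omega(P,A_\lambda P)=p^2-\frac{q^2}{\eps},
\end{equation}
not $|p|^2$: the sign pattern of $Q$ that makes the system skew-gradient is precisely what inserts the \emph{negative} term $-q^2/\eps$, and since $\eps$ is small this term is potentially large. Positivity of $\int_{\bbR}e^{cz}\bigl(p^2-q^2/\eps\bigr)\,dz$ is therefore not a pointwise sign observation but a theorem (the paper's Theorem \ref{monotonicity thm}): one must use the second component of the eigenvalue equation to bound $\int e^{cz}q_z^2+(\eps\gamma+\lambda)\int e^{cz}q^2=\eps\int e^{cz}pq$, combine this with the weighted Poincar\'e inequality $\frac{c^2}{4}\int e^{cz}q^2\leq\int e^{cz}q_z^2$ (Lemma \ref{Poincare lma num}) and Cauchy--Schwarz, and invoke the smallness condition $0<\eps<c^4/16$ to conclude $\int e^{cz}q^2/\eps<\int e^{cz}p^2$. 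Without this estimate you have neither regularity of the crossings nor a fixed sign for their contributions, and the entire chain $|\mu(\alpha_2)|=\sum(\text{geometric multiplicities})$ collapses; this is the step where the skew-gradient structure genuinely threatens to break monotonicity and where the fast-wave hypothesis on $\eps$ is actually used.
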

\subsection{Realness of $\sigma(L_\eps)$}
For reference, we write out the eigenvalue problem for $L_\eps$ as a first order system, as in (\ref{eval eqn matrix}): \begin{equation}\label{eval eqn FHN}
\left(\begin{array}{c}
p\\q\\r\\s
\end{array} \right)_z=\left(\begin{array}{c c c c}
0 & 0 & 1 & 0\\
0 & 0 & 0 & \eps\\
\lambda-g'(\hat{u}) & 1 & -c & 0\\
-1 & \frac{\lambda}{\eps}+\gamma & 0 & -c
\end{array}\right)\left(\begin{array}{c}
p\\q\\r\\s
\end{array}\right).
\end{equation} As in \S 2, we abbreviate (\ref{eval eqn FHN}) as \begin{equation}\label{eval eqn FHN abb}
Y'(z)=A(\lambda,z)Y(z).
\end{equation} The discussion of $\sigma_\mathrm{ess}(L)$ from \S 2 applies to $L_\eps$ as well. However, the upper bound $\beta$ on the real part of the essential spectrum now depends on $\eps$. This is not a problem, since $\eps$ is fixed in the stability analysis. However, a few of the results to follow need $\eps$ to be ``sufficiently small." For completeness, we record the following lemma on $\sigma_\mathrm{ess}(L_\eps)$. \begin{lemma}
	For each $\eps>0$ sufficiently small, there exists $\beta_\eps<0$ such that \begin{equation}
	\sigma_\mathrm{ess}(L_\eps)\subset H_\eps:=\{\lambda\in\bbC:\mathrm{Re}\,\lambda<\beta_\eps\}.
	\end{equation}
\end{lemma}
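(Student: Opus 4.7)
The plan is to reduce this to the general calculation already carried out in \S 2. That calculation established, for any system of the form (\ref{gen pde}) satisfying the Turing-type assumption (\ref{Turing assumption}), that $\sigma_\mathrm{ess}(L)$ lies strictly to the left of the vertical line $\mathrm{Re}\,\lambda=\beta$, where $\beta$ is any negative upper bound on $\mathrm{Re}\,\nu_i$ for the eigenvalues $\nu_i$ of $QSf'(0)$. Therefore, to obtain the claim, all that is genuinely needed is to verify that (\ref{Turing assumption}) holds in the FitzHugh--Nagumo setting for all sufficiently small $\eps>0$, and to choose $\beta_\eps$ accordingly.

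I would carry this out as follows. First, using the explicit expression (\ref{fhn nonlinearity evals}) for the eigenvalues $\nu_i$ of $QSf'(0)$, observe that their sum equals $-(a+\eps\gamma)<0$ and their product equals $\eps(a\gamma+1)>0$. Hence, whenever the $\nu_i$ are real, they are both strictly negative. Since the discriminant $(a+\eps\gamma)^2-4\eps(a\gamma+1)$ converges to $a^2>0$ as $\eps\to 0$, for all sufficiently small $\eps>0$ the $\nu_i$ are in fact real, distinct, and negative. We may therefore set
\begin{equation}
\beta_\eps \;=\; \tfrac{1}{2}\max\{\nu_1,\nu_2\} \;<\; 0,
\end{equation}
which gives a negative number strictly larger than every $\mathrm{Re}\,\nu_i$, as required by (\ref{Turing assumption}).

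With this choice of $\beta_\eps$, I would then invoke the argument from \S 2 verbatim: the eigenvalues of $A_\infty(\lambda)$ for the FitzHugh--Nagumo eigenvalue system (\ref{eval eqn FHN abb}) still have the form (\ref{evals of A(lambda)}), so the identity (\ref{sqrt ineq}) yields
\begin{equation}
\mathrm{Re}\sqrt{c^2+4(\lambda-\nu_i)} \;\geq\; \sqrt{\mathrm{Re}(c^2+4(\lambda-\nu_i))} \;>\; \sqrt{c^2} \;=\; -c
\end{equation}
whenever $\mathrm{Re}\,\lambda\geq\beta_\eps$, which precludes $A_\infty(\lambda)$ from having any purely imaginary eigenvalue on that half-plane. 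By the characterization $\sigma_\mathrm{ess}(L_\eps)=\{\lambda\in\bbC:A_\infty(\lambda)\text{ has an eigenvalue in }i\bbR\}$ recalled in \S 2, this gives $\sigma_\mathrm{ess}(L_\eps)\subset H_\eps$.

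The argument presents no real obstacle; the only subtlety is cosmetic, namely that the bound $\beta_\eps$ must be chosen $\eps$-dependent and must remain bounded away from $0$ uniformly in $\eps$ on any compact subinterval of $(0,\eps_0)$ in order for the downstream arguments in \S\S 4--6 to proceed for each fixed $\eps$. Since $\nu_i\to -a/2,\,-2\gamma$ roots of the limiting factorization (or more concretely, one $\nu_i$ tends to $-a$ and the other to $0^-$ at rate $O(\eps)$), the weaker half, $\beta_\eps=\tfrac12\max\nu_i$, does go to $0$ as $\eps\to 0$, but for each fixed small $\eps>0$ it is a strictly negative constant, which is all the statement of the lemma demands.
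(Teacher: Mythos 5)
Your proposal is correct and follows essentially the same route as the paper, which states this lemma without a separate proof precisely because it has already verified, via (\ref{fhn nonlinearity evals}), that the $\nu_i$ are negative and distinct for small $\eps$, so that the general \S 2 argument applies with an $\eps$-dependent $\beta$. Your sum/product and discriminant check of the $\nu_i$ and the choice $\beta_\eps=\tfrac12\max\{\nu_1,\nu_2\}$ fill in the details correctly (the only slip is the garbled aside about the limiting roots, which you immediately correct: one $\nu_i$ tends to $-a$ and the other to $0^-$ at rate $O(\eps)$).
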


 The analysis of $L_\eps$ is complicated by the presence of the $\p_z$ term in (\ref{fhn operator}). We can sidestep this difficulty by considering instead the operator \begin{equation}\label{Lc defn}
L_c:=e^{cz/2}L_\eps e^{-cz/2},
\end{equation} as is done in \cite{BJ,HLS16}. It is a routine calculation to see that for $(p, q)^T\in BU(\bbR,\bbC^2)$, we have \begin{equation}
L_c\left(\begin{array}{c}
p\\q
\end{array}\right)=\left(\begin{array}{c}
p_{zz}+\left(g'(\hat{u})-\frac{c^2}{4}\right)p-q\\
q_{zz}+\eps p-\left(\frac{c^2}{4}+\eps\gamma\right)q
\end{array}\right).
\end{equation}
Furthermore, if $L_\eps P=\lambda P$, then $L_c(e^{cz/2}P)=\lambda e^{cz/2}P$. This proves that the eigenvalues of $L_\eps$ and $L_c$ are the same, provided that $e^{cz/2}P$ is bounded for a given eigenvector $P$ of $L_\eps$. This is clearly the case as $z\rightarrow\infty$ since $c<0$. For the other tail, let $\lambda\in\bbC\setminus H_\eps$ be an eigenvalue of $L_\eps$ with associated eigenvector $P$. Since $A_\infty(\lambda)$ is hyperbolic with simple eigenvalues, $P$ must decay at least as fast as $e^{\mu_3(\lambda)z}$ as $z\rightarrow-\infty$. It follows that $e^{cz/2}P$ is bounded at $-\infty$ if \begin{equation}
\frac{c}{2}+\mu_3(\lambda)>0.
\end{equation} 
This is indeed the case, by (\ref{evals of A(lambda) fhn}). We therefore consider the eigenvalue problem \begin{equation}\label{eval problem 2}
L_cP=\lambda P.
\end{equation}
Making the change of variables $\tilde{q}=\frac{1}{\sqrt{\eps}}q$, we can rewrite (\ref{eval problem 2}) as (dropping the tildes) \begin{equation}\label{L_c eval prob}
\left(\begin{array}{c c}
\p_z^2+\left(g'(\hat{u})-\frac{c^2}{4}\right) & -\sqrt{\eps}\\
\sqrt{\eps} & \p_z^2-\left(\frac{c^2}{4}+\eps\gamma\right)
\end{array}\right)\left(\begin{array}{c}
p\\q
\end{array}\right)=\lambda\left(\begin{array}{c}
p\\q
\end{array}\right).
\end{equation}
$L_c$ is now seen to be of the form \begin{equation}
L_c=\left(\begin{array}{c c}
L_p & -\sqrt{\eps}\\
\sqrt{\eps} & L_q
\end{array}\right),
\end{equation} where $L_{p/q}$ are self-adjoint on $H^1(\bbR)$. Since any eigenfunction of $L_c$ in $BU(\bbR,\bbC^2)$ is exponentially decaying (provided $\lambda\in\bbC\setminus H_\eps$), we are free to consider the spectrum of $L_c$ as an operator on the Hilbert space $H^1(\bbR,\bbC^2)$ instead. The payoff of studying $L_c$ instead of $L_\eps$ is the following result, a version of which was proved in Lemma 4.1 of \cite{CH14}. We reproduce the proof here for convenience of the reader. \begin{lemma}\label{real spec lemma}
	For $\eps>0$ sufficiently small, if $\lambda\in\sigma_{n}(L_c)\cap(\bbC\setminus H_\eps)$ and $\mathrm{Re}\,\lambda\geq-\frac{c^2}{8} $, then $\lambda\in\bbR$. Consequently, the same is true for $L_\eps$.
\end{lemma}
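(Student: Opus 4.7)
The plan is to exploit the near-self-adjointness of $L_c$: the diagonal entries $L_p, L_q$ are self-adjoint on $L^2(\bbR)$, while the off-diagonal coupling is an antisymmetric perturbation of size $\sqrt{\eps}$. The crucial observation is that $L_q = \p_z^2 - (c^2/4 + \eps\gamma)$ is a constant-coefficient negative Schr\"odinger operator with $\sigma(L_q) \subset (-\infty,-c^2/4-\eps\gamma]$, so every $\lambda$ with $\mathrm{Re}\,\lambda \geq -c^2/8$ lies in the resolvent set of $L_q$, with
\begin{equation*}
\|(L_q - \lambda)^{-1}\| \;\leq\; \bigl(\mathrm{Re}(\lambda) + c^2/4 + \eps\gamma\bigr)^{-1} \;\leq\; 8/c^2.
\end{equation*}
This resolvent bound is precisely the reason for the cutoff $-c^2/8$ in the hypothesis.

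Given an $H^2$-eigenfunction $P = (p,q)^\top$ of $L_c$ with eigenvalue $\lambda$, I would write out the two scalar components of $L_cP = \lambda P$, solve the second for $q = -\sqrt{\eps}(L_q - \lambda)^{-1}p$, and substitute into the first to obtain the Schur-complement equation
\begin{equation*}
(L_p - \lambda)p \;+\; \eps\,(L_q - \lambda)^{-1}p \;=\; 0.
\end{equation*}
Pairing with $p$ in $L^2$ and using self-adjointness of $L_p$ (so that $\langle L_p p,p\rangle\in\bbR$), the imaginary part reads
\begin{equation*}
\mathrm{Im}(\lambda)\,\|p\|^2 \;=\; \eps\,\mathrm{Im}\langle (L_q - \lambda)^{-1}p,\,p\rangle.
\end{equation*}
Setting $f = (L_q - \lambda)^{-1}p$ and pairing $(L_q - \lambda)f = p$ with $f$, the identity $\mathrm{Im}\langle p,f\rangle = -\mathrm{Im}(\lambda)\|f\|^2$ (again from self-adjointness of $L_q$) yields $\mathrm{Im}\langle (L_q-\lambda)^{-1}p, p\rangle = \mathrm{Im}(\lambda)\,\|(L_q-\lambda)^{-1}p\|^2$. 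Substituting back, if $\mathrm{Im}(\lambda)\neq 0$ we may divide to obtain
\begin{equation*}
\|p\|^2 \;=\; \eps\,\|(L_q - \lambda)^{-1}p\|^2 \;\leq\; \frac{64\,\eps}{c^4}\,\|p\|^2,
\end{equation*}
which for $\eps < c^4/64$ forces $p \equiv 0$, whence $q \equiv 0$ from the first component, contradicting that $P$ is nonzero. This establishes the result for $L_c$; the consequent statement for $L_\eps$ is immediate from the conjugacy $L_c = e^{cz/2}L_\eps e^{-cz/2}$ together with the decay argument given just before equation (5.8) of the excerpt, which guarantees that an eigenfunction of $L_\eps$ in $\bbC\setminus H_\eps$ transforms to an $H^1$-eigenfunction of $L_c$.

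There is no serious obstacle: the argument is essentially an application of the Schur complement at the level of unbounded operators, and the only nontrivial input is the explicit resolvent bound for the constant-coefficient operator $L_q$. The mildly delicate point is the domain bookkeeping for the Schur reduction, but this is standard because eigenfunctions are automatically in $H^2$ by elliptic regularity, and because $L_q - \lambda$ is boundedly invertible on $L^2$ for $\lambda$ in the stated half-plane. This reproduces, in our setting, the argument indicated in \cite{CH14}.
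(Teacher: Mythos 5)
Your proposal is correct and follows essentially the same route as the paper: the Schur-complement reduction $q=-\sqrt{\eps}(L_q-\lambda)^{-1}p$, pairing the resulting scalar equation with $p$, and extracting imaginary parts using self-adjointness of $L_p$ and $L_q$. The only cosmetic difference is that you close the argument with the explicit resolvent bound $\|(L_q-\lambda)^{-1}\|\leq 8/c^2$ and the identity $\mathrm{Im}\langle (L_q-\lambda)^{-1}p,p\rangle=\mathrm{Im}(\lambda)\|(L_q-\lambda)^{-1}p\|^2$, whereas the paper decomposes the resolvent into real and imaginary self-adjoint parts and argues that $\eps\left((L_q-a)^2+b^2\right)^{-1}-I$ is negative definite; these are the same computation.
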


\begin{proof}
	Let $\lambda=a+bi$ be an eigenvalue for $L_c$ with corresponding eigenvector $(p, q)^T$. Assume further that $a\geq -c^2/8$. Notice that the second equation in (\ref{L_c eval prob}) can be solved for $q$, since $L_q+c^2/8$ is negative definite (and hence $\lambda\notin\sigma(L_q))$. Explicitly, we have \begin{equation}
	q=-{\sqrt{\eps}}\left(L_q-a-bi\right)^{-1}p
	\end{equation}
	Next, substitute this expression into the first equation of (\ref{L_c eval prob}) to obtain \begin{equation}\label{specRe eqn 1}
	L_pp+\eps(L_q-a-bi)^{-1}p=(a+bi)p.
	\end{equation} Taking the $H^1$ pairing $\langle\cdot,\cdot\rangle$ with $p$ in (\ref{specRe eqn 1}) yields \begin{equation}\label{specRe eqn 2}
	\langle L_pp,p\rangle+\eps\langle (L_q-a-bi)^{-1}p,p\rangle=(a+bi)\langle p,p\rangle.
	\end{equation} Recalling that $L_p$ is self-adjoint, we extract the imaginary parts of (\ref{specRe eqn 2}): \begin{equation}\label{specRe eqn 3}
	\eps\, \mathrm{Im}\langle\left(L_q-a-bi\right)^{-1}p,p \rangle = b\langle p,p\rangle.
	\end{equation} The operator inverse in (\ref{specRe eqn 3}) can be decomposed into (self-adjoint) real and imaginary parts as follows: \begin{equation}\label{specRe eqn 4}
	(L_q-a-bi)^{-1}=\left((L_q-a)^2+b^2\right)^{-1}(L_q-a)+ib\left((L_q-a)^2+b^2\right)^{-1}.
	\end{equation} Combining (\ref{specRe eqn 3}) and (\ref{specRe eqn 4}), we arrive at \begin{equation}\label{specRe eqn 5}
	b\langle \left[\eps\left((L_q-a)^2+b^2\right)^{-1}-I\right]p,p\rangle=0,
	\end{equation} where $I$ denotes the identity operator. For operators on $H^1(\bbC,\bbC^2)$ we write $A<B$ if $(B-A)$ is positive definite. Since $L_q-a<0$ (independently of $\eps$) it follows from the inequality \begin{equation}
	\left((L_q-a)^2+b^2\right)^{-1}<(L_q-a)^{-2}
	\end{equation} and the fact that $(L_q-a)^{-2}$ is bounded that \begin{equation}
	\eps\left((L_q-a)^2+b^2\right)^{-1}-I<0
	\end{equation} for $\eps$ small enough. In conjunction with (\ref{specRe eqn 5}), this implies that $b=0$, as desired.
\end{proof}

\subsection{Monotonicity of $\lambda$-Crossings}
Recall that conjugate points along $\alpha_2$ correspond to eigenvalues of $L_\eps$. We will show below that the crossing form (in $\lambda$) is positive definite at all such crossings. This is the most significant difference between skew-gradient systems and the gradient systems considered in \cite{HLS16}, since the crossing form is always positive definite in the latter case (cf. \S 4.1 and \S 5.5). Conversely, we rely on the smallness of $\eps$ to get monotonicity of the crossings for $L_\eps$ in (\ref{fhn operator}). We stress that the $\lambda$ crossing form developed in this section would be the same for general systems (\ref{gen pde}). We focus on $L_\eps$ only because we are able to prove that the form is positive definite in this case.

To derive the $\lambda$ crossing form, we first take a closer look at the $z$ crossing form (\ref{crossing form z}). Suppose that $z^*$ is a conjugate point for $\alpha_1$, and that $\xi\in E^u(0,z^*)\cap E^s(0,\tau)$. By virtue of being in $E^u(0,z^*)$, we know that there exists a solution $u(z)$ of (\ref{eval eqn FHN}) such that $u(z)\in E^u(0,z)$ and $E^u(0,z^*)=\xi$. It follows that (\ref{crossing form z formula}) can be rewritten \begin{equation}
\Gamma(E^u(\lambda,\cdot),E^s(\lambda,\tau),z^*)(\xi)=\omega(\xi,A(0,z^*)\xi)=\omega(u(z),\p_z u(z))|_{z=z^*}.
\end{equation} In other words, the crossing form simplifies when evaluated on a vector that is part of a solution to a differential equation. Now suppose that $\lambda=\lambda^*$ is a conjugate point for $\alpha_2$, with $\xi\in E^u(\lambda^*,\tau)\cap E^s(\lambda^*,\tau)$. From (\ref{rel crossing form}), we know that we must evaluate two crossing forms--one where the curve $E^s(\lambda,\tau)$ is frozen at $\lambda=\lambda^*$ and one where $E^u(\lambda,\tau)$ is frozen at $\lambda=\lambda^*$. To simplify the calculations, we will work with $\Omega$ instead of $\omega$. Since one of these forms is just a scaled version of the other, it is clear that the signatures are the same, and hence the Maslov indices are as well. First consider the curve $\lambda\mapsto E^u(\lambda,\tau)$ and reference plane $E^s(\lambda^*,\tau)$. As in \S 3, we can write $E^u(\lambda,\tau)$ for $|\lambda-\lambda^*|$ small as the graph of an operator $B_\lambda:E^u(\lambda^*,\tau)\rightarrow J\cdot E^u(\lambda^*,\tau)$. This, in turn, generates a smooth curve $\gamma(\lambda)=(\xi+B_\lambda\xi)\in E^u(\lambda,\tau)$ with $\gamma(\lambda^*)=\xi$. By flowing backwards in $z$, we obtain a one-parameter family $u(\lambda,z)$ of solutions to (\ref{eval eqn FHN}) in $E^u(\lambda,\tau)$, with $u(\lambda^*,\tau)=\xi$. The same reasoning as for $z$ then shows that \begin{equation}\label{lambda xing form u}
\Gamma(E^u(\cdot,\tau),E^s(\lambda^*,\tau),\lambda^*)(\xi)=\Omega(u(\lambda,z),\p_\lambda u(\lambda,z))|_{\lambda=\lambda^*,z=\tau}.
\end{equation} The case where $E^s(\lambda,\tau)$ varies and $E^u(\lambda^*,\tau)$ is fixed is identical. We can generate a smooth family of solutions $v(\lambda,z)\in E^s(\lambda,z)$ with $v(\lambda^*,\tau)=\xi$. This half of the crossing form is then given by \begin{equation}\label{lambda xing form s}
\Gamma(E^s(\cdot,\tau),E^u(\lambda^*,\tau),\lambda^*)(\xi)=\Omega(v(\lambda,z),\p_\lambda v(\lambda,z))|_{\lambda=\lambda^*,z=\tau}.
\end{equation} By uniqueness of solutions, we importantly have \begin{equation}\label{eval condition}
u(\lambda^*,z)=v(\lambda^*,z):=P(z),
\end{equation} which is a $\lambda^*$-eigenvector of $L_\eps$. Putting together (\ref{rel crossing form}), (\ref{lambda xing form u}), and (\ref{lambda xing form s}), we see that \begin{equation}\label{lambda crossing form}
\begin{aligned}
\Gamma(E^u(\cdot,\tau),E^s(\cdot,\tau),\lambda^*)(\xi) & = \{\Omega(u(\lambda,z),\p_\lambda u(\lambda,z))-\Omega(v(\lambda,z),\p_\lambda v(\lambda,z))\}|_{\lambda=\lambda^*,z=\tau}\\
& = \{\Omega(u(\lambda,z),\p_\lambda u(\lambda,z))+\Omega(\p_\lambda v(\lambda,z),v(\lambda,z))\}|_{\lambda=\lambda^*,z=\tau}\\
& = \p_\lambda \Omega(v(\lambda,z),u(\lambda,z))|_{\lambda=\lambda^*,z=\tau},
\end{aligned}
\end{equation} where the last equality follows from (\ref{eval condition}).

The expression obtained in (\ref{lambda crossing form}) will be useful in the next section when we relate the crossing form to the Evans function. For now, we compute (\ref{lambda xing form u}) and (\ref{lambda xing form s}) directly. For (\ref{lambda xing form u}), we use the equality of mixed partials and the fact that $u$ solves (\ref{eval eqn FHN}) to obtain \begin{equation}\label{Lambda eqn}
(\p_\lambda u(\lambda,z))_z=A(\lambda,z)\p_\lambda u(\lambda,z)+A_\lambda u(\lambda,z),
\end{equation} where \begin{equation}
A_\lambda:=\p_\lambda A(\lambda,z)=\left(\begin{array}{c c c c}
0 & 0 & 0 & 0\\
0 & 0 & 0 & 0\\
1 & 0 & 0 & 0\\
0 & \eps^{-1} & 0 & 0
\end{array}\right).
\end{equation} Next, apply $\omega(u(\lambda,z),\cdot)$ to (\ref{Lambda eqn}) to see that \begin{equation}
\begin{aligned}
\omega(u(\lambda,z),A_\lambda u(\lambda,z)) & =\omega(u(\lambda,z),(\p_\lambda u(\lambda,z))_z)-\omega(u(\lambda,z),A(\lambda,z)\p_\lambda u(\lambda,z))\\
& = \omega(u(\lambda,z),(\p_\lambda u(\lambda,z))_z)+\omega(A(\lambda,z)u(\lambda,z),\p_\lambda u(\lambda,z))+c\omega(u(\lambda,z),\p_\lambda u(\lambda,z))\\
& =\p_z\omega(u(\lambda,z),\p_\lambda u(\lambda,z))+c\omega(u(\lambda,z),\p_\lambda u(\lambda,z)).
\end{aligned}
\end{equation} The second equality follows from the proof of Theorem \ref{L(n) invariant}, specifically (\ref{compatibility calc}).
Applying an integrating factor and using (\ref{Omega defn}) and (\ref{eval condition}) then shows that \begin{equation}\label{lambda crossing u}
\begin{aligned}
\Gamma(E^u(\cdot,\tau),E^s(\lambda^*,\tau),\lambda^*)(\xi) & =\Omega(u,\p_\lambda u)(\lambda^*,\tau)\\
& =\int\limits_{-\infty}^{\tau}\p_z\Omega(u(\lambda,z),\p_\lambda u(\lambda,z))|_{\lambda=\lambda^*}\,dz =\int\limits_{-\infty}^{\tau}e^{cz}\omega(P,A_\lambda P)\,dz.
\end{aligned}
\end{equation} The preceding calculation makes use of the fact that $u(\lambda,z)\in E^u(\lambda,z)$, and hence it decays faster than $e^{cz}$ as $z\rightarrow-\infty$, by (\ref{evals of A(lambda) fhn}). The calculation of the crossing form for the stable bundle using the solutions $v(\lambda,z)$ is identical until the last step. Indeed, those solutions decay at $+\infty$, so an application of the Fundamental Theorem gives \begin{equation}\label{lambda crossing s}
\begin{aligned}
\Gamma(E^s(\cdot,\tau),E^u(\lambda^*,\tau),\lambda^*)(\xi) & =\Omega(v,\p_\lambda v)(\lambda^*,\tau)\\
& =-\int\limits_{\tau}^{\infty}\p_z\Omega(v,\p_\lambda v)(\lambda^*,z)\,dz =-\int\limits_{\tau}^{\infty}e^{cz}\omega(P,A_\lambda P)\,dz.
\end{aligned}
\end{equation} Combining (\ref{rel crossing form}), (\ref{lambda crossing u}), and (\ref{lambda crossing s}), we see that the relative crossing form is given by \begin{equation}\label{lambda crossing form mono}
\Gamma(E^u(\cdot,\tau),E^s(\cdot,\tau),\lambda^*)(\xi)=\int_{-\infty}^{\infty}e^{cz}\,\omega(P,A_\lambda P)\,dz,
\end{equation} where $P\in E^u(\lambda^*,z)\cap E^s(\lambda^*,z)$ is the $\lambda^*$-eigenfunction of $L_\eps$ satisfying $P(\tau)=\xi$. Writing $P:=(p,q,p_z,q_z/\eps)$, it is straightforward to calculate from (\ref{symplectic form compatible}) that \begin{equation}\label{lambda crossing form integrand}
\omega(P,A_\lambda P)=p^2-\frac{q^2}{\eps}.
\end{equation} The following theorem shows that $\Gamma$ is positive definite for each conjugate point of $\alpha_2$, which proves that $\Maslov$ equals the sum of the geometric multiplicities of all unstable eigenvalues of $L_\eps$. 
\begin{theorem}\label{monotonicity thm}
	Let $\lambda\in\sigma(L_\eps)\cap(\bbR^{+}\cup\{0\})$ with corresponding eigenvector $P=(p,q)^T$. Suppose further that $0<\eps<\frac{c^4}{16}$. Then \begin{equation}
	\int\limits_{-\infty}^{\infty}e^{cz}\left(p^2-\frac{q^2}{\eps}\right)\,dz>0.
	\end{equation} In other words, the crossing form (\ref{lambda crossing form}) is positive definite for all $\lambda\in[0,\lambda_\mathrm{max}].$
\end{theorem}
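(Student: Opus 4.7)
The plan is to use the conjugate operator $L_c = e^{cz/2} L_\eps e^{-cz/2}$ from Lemma 5.2, since the exponential weight in the integrand is exactly absorbed by that conjugation. Given an eigenfunction $P = (p,q)^T$ of $L_\eps$ for eigenvalue $\lambda \geq 0$, I would set $(\tilde p, \tilde q) = e^{cz/2}(p,q)$ and then $Q = \tilde q/\sqrt\eps$, mirroring the two changes of variables leading to (5.12). The integrand transforms cleanly as $e^{cz}(p^2 - q^2/\eps) = \tilde p^2 - Q^2$, and both $\tilde p$ and $Q$ lie in $H^2(\bbR)$ because the eigenfunction's decay rate exceeds $|c|/2$ on both tails (trivially at $+\infty$; at $-\infty$ because the decay rate $\mu_3(\lambda) > -c > -c/2$ by (5.6)). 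The task therefore reduces to proving $\int_\bbR \tilde p^2\,dz > \int_\bbR Q^2\,dz$.

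The second row of the eigenvalue system (5.12) reads $L_q Q + \sqrt\eps\, \tilde p = \lambda Q$, with $L_q = \p_z^2 - (c^2/4 + \eps\gamma)$, so I would solve to obtain $\sqrt\eps\, \tilde p = (\lambda - L_q) Q$. The operator $\lambda - L_q = -\p_z^2 + (\lambda + c^2/4 + \eps\gamma)$ is self-adjoint on $H^2(\bbR)$ with spectrum contained in $[c^2/4, \infty)$, since $-\p_z^2 \geq 0$ and $\lambda, \eps, \gamma \geq 0$. By the functional calculus for positive self-adjoint operators, $(\lambda - L_q)^2 \geq (c^4/16)\, I$, which yields
\begin{equation*}
\eps \int_\bbR \tilde p^2 \, dz \;=\; \langle (\lambda - L_q)^2 Q, Q \rangle \;\geq\; \frac{c^4}{16}\int_\bbR Q^2 \, dz.
\end{equation*}
Dividing by $\eps$ and invoking the hypothesis $\eps < c^4/16$ produces $\int \tilde p^2\,dz > \int Q^2\,dz$, provided $Q$ is not identically zero; but $Q \equiv 0$ would force $\tilde p \equiv 0$ via the identity above, contradicting $P \neq 0$.

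The substantive step—and the only place where the hypothesis $\eps < c^4/16$ really bites—is the choice to eliminate $\tilde p$ using the second equation rather than $Q$ using the first. The operator $L_q$ carries a uniform $c^2/4$ shift inherited from the conjugation to $L_c$, giving a spectral bound that is independent of both $\lambda \in [0,\lambda_{\max}]$ and the profile $\hat u$. The analogous operator $L_p$ depends on $g'(\hat u)$, which changes sign along the pulse, and would not furnish such a clean lower bound. This asymmetry, rather than the bookkeeping of the conjugation or the functional calculus itself, is the real obstacle one has to navigate.
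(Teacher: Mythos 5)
Your proof is correct, and it reaches the paper's key inequality $\eps\int\tilde p^2\,dz\geq\tfrac{c^4}{16}\int Q^2\,dz$ by a genuinely different (though unitarily equivalent) route. The paper stays in the original variables: it multiplies the second eigenvalue equation by $e^{cz}q$, integrates by parts, drops the nonnegative term $(\eps\gamma+\lambda)\int e^{cz}q^2$, and then combines the weighted Poincar\'e inequality (Lemma \ref{Poincare lma num}) with Cauchy--Schwarz applied to $\int e^{cz}pq$. You instead conjugate by $e^{cz/2}$ as in (\ref{Lc defn}) so that the weight is absorbed, solve the second row of (\ref{L_c eval prob}) exactly as $\sqrt{\eps}\,\tilde p=(\lambda-L_q)Q$, and invoke the operator bound $(\lambda-L_q)^2\geq\tfrac{c^4}{16}I$; this replaces the Poincar\'e-plus-Cauchy--Schwarz step by a single application of the functional calculus, and it makes visible at a glance why the bound is uniform in $\lambda\in[0,\lambda_{\max}]$ and why the threshold is exactly $\eps<c^4/16$. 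The two arguments are really the same estimate in disguise --- under $h=e^{-cz/2}\tilde h$ the weighted Poincar\'e inequality is precisely the statement $-\p_z^2\geq 0$, and the paper's chain $\tfrac{c^2}{4}\|Q\|^2\leq\langle(\lambda-L_q)Q,Q\rangle\leq\sqrt{\eps}\,\|\tilde p\|\|Q\|$ squares to your $\|(\lambda-L_q)Q\|^2\geq\tfrac{c^4}{16}\|Q\|^2$ --- but your packaging trades the external citation to \cite{LMN04} for the requirement that $\tilde p,Q$ lie in the operator domain, which you correctly justify from the decay rates in (\ref{evals of A(lambda) fhn}). Your explicit handling of the degenerate case $Q\equiv 0$ is also welcome: the paper's strict inequality likewise silently relies on $q\not\equiv 0$, which is forced by the same observation that $q\equiv 0$ would give $\eps p\equiv 0$. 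Your closing remark about the asymmetry between the $p$- and $q$-equations correctly identifies why one must eliminate $\tilde p$ rather than $Q$: only $L_q$ carries a sign-definite potential.
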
 
The proof of this theorem uses the following Poincar\'{e}-type inequality. 
\begin{lemma}\label{Poincare lma num}
	Suppose $h\in H^1(\bbR)$ satisfies \begin{equation}\label{weighted norm}
	\int\limits_{-\infty}^{\infty}e^{cz}\left(h^2+(h_z)^2\right)\,dz<\infty.
	\end{equation} Then for all $R\in\bbR$ (including $R=\infty$), we have \begin{equation}\label{Poincare lma}
	\frac{c^2}{4}\int\limits_{-\infty}^{R}e^{cz}h^2\,dz\leq\int\limits_{-\infty}^{R}e^{cz}(h_z)^2\,dz.
	\end{equation}
\end{lemma}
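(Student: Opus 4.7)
\smallskip
\noindent
\textbf{Proof plan.} The inequality is a weighted Poincar\'e estimate, and the natural move is to unwind the exponential weight. I would set $g(z)=e^{cz/2}h(z)$, so that $g_z = \tfrac{c}{2}g + e^{cz/2}h_z$, which gives the pointwise identities $e^{cz}h^2=g^2$ and $e^{cz}h_z^2 = g_z^2 - c\,gg_z + \tfrac{c^2}{4}g^2$. Equivalently, one can work directly with $h$ and integrate by parts on $\int e^{cz}h\,h_z\,dz$, using $\frac{d}{dz}(e^{cz}h^2)=c\,e^{cz}h^2+2e^{cz}h\,h_z$. Either way, everything reduces to controlling a boundary term and applying Cauchy--Schwarz.

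Carrying out the integration by parts on the interval $(-\infty,R]$ yields
\begin{equation*}
\int_{-\infty}^{R} e^{cz} h\,h_z\,dz \;=\; \tfrac{1}{2}e^{cR}h(R)^2 \;-\; \tfrac{c}{2}\int_{-\infty}^{R}e^{cz}h^2\,dz,
\end{equation*}
provided the boundary term at $-\infty$ vanishes. Since $c<0$, both summands on the right are nonnegative, so
\begin{equation*}
\int_{-\infty}^{R} e^{cz} h\,h_z\,dz \;\geq\; -\tfrac{c}{2}\,A_R, \qquad A_R:=\int_{-\infty}^{R}e^{cz}h^2\,dz.
\end{equation*}
On the other hand, the weighted Cauchy--Schwarz inequality (writing the integrand as $(e^{cz/2}h)(e^{cz/2}h_z)$) gives
\begin{equation*}
\int_{-\infty}^{R} e^{cz} h\,h_z\,dz \;\leq\; \sqrt{A_R\,B_R}, \qquad B_R:=\int_{-\infty}^{R}e^{cz}h_z^2\,dz.
\end{equation*}
Combining the two bounds and squaring yields $\tfrac{c^2}{4}A_R^2\le A_R B_R$, which is the desired inequality (trivially if $A_R=0$). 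The same argument covers $R=\infty$.

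The only nontrivial point is justifying that the boundary term at $-\infty$ vanishes, i.e.\ $e^{cz}h(z)^2\to 0$ as $z\to-\infty$. This is exactly where the finiteness hypothesis \eqref{weighted norm} enters: with $g=e^{cz/2}h$, the hypothesis says precisely that $g\in L^2(\mathbb{R})$, and from $g_z=\tfrac{c}{2}g+e^{cz/2}h_z$ one also gets $g_z\in L^2(\mathbb{R})$. Hence $(g^2)'=2gg_z\in L^1(\mathbb{R})$ by Cauchy--Schwarz, so $g^2$ is absolutely continuous with integrable derivative and therefore has limits at $\pm\infty$; since $g^2\in L^1(\mathbb{R})$, those limits are $0$. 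In particular $e^{cz}h^2=g^2\to 0$ at $-\infty$, legitimizing the integration by parts.

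The main obstacle is this boundary argument: one has to resist the temptation to use $h\in H^1(\mathbb{R})$ alone (which would give $h\to 0$ but with no control against the blow-up of $e^{cz}$ at $-\infty$), and instead use the weighted hypothesis \eqref{weighted norm} to transfer the decay to $g=e^{cz/2}h$. Once this is done the rest is a one-line integration by parts plus Cauchy--Schwarz.
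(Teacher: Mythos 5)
Your proof is correct and complete; the paper itself only cites Lemma 4.1 of [LMN04] for this estimate, and your integration by parts on $\int_{-\infty}^{R}e^{cz}h\,h_z\,dz$ followed by Cauchy--Schwarz is exactly the standard argument behind that reference (equivalently, expanding $0\le\int e^{cz}(h_z+\tfrac{c}{2}h)^2\,dz$). You also correctly identify and dispatch the only genuine subtlety, namely that the vanishing of the boundary term $e^{cz}h^2$ at $-\infty$ must come from the weighted hypothesis via $g=e^{cz/2}h$ rather than from $h\in H^1(\bbR)$ alone.
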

The proof of this inequality is a simple estimate using the fact that (\ref{weighted norm}) defines a norm on an exponentially weighted Sobolev space. For more details, we refer the reader to Lemma 4.1 of \cite{LMN04}, the source of this result. 

\begin{proof}[Proof of Theorem \ref{monotonicity thm}]

Written as a system, the eigenvalue equation $L_\eps P=\lambda P$ is \begin{equation}\label{eval problem written}
\begin{aligned}
p_{zz}+cp_z+(f'(\hat{u})-\lambda)p-q  =0\\
q_{zz}+cq_z+\eps p-(\eps\gamma+\lambda)q  =0.
\end{aligned}
\end{equation}
Now, multiply the second equation in (\ref{eval problem written}) by $e^{cz}q$ to obtain \begin{equation}\label{pos def calc 1}
\left(e^{cz}q_z\right)_zq-(\eps\gamma+\lambda)e^{cz}q^2=-e^{cz}\eps pq.
\end{equation} Since $p,q$ and their derivatives all decay exponentially in both tails, we can integrate (\ref{pos def calc 1}) to obtain (after an integration by parts) \begin{equation}\label{pos def calc 2}
\int\limits_{-\infty}^{\infty}e^{cz}(q_z)^2\,dz+(\eps\gamma+\lambda)\int\limits_{-\infty}^{\infty}e^{cz}q^2\,dz=\eps\int\limits_{-\infty}^{\infty}e^{cz}pq\,dz.
\end{equation} It then follows from (\ref{Poincare lma}), (\ref{pos def calc 2}), and the Cauchy-Schwarz inequality that \begin{equation}
\begin{aligned}
\frac{c^2}{4\eps}\int\limits_{-\infty}^{\infty}e^{cz}q^2\,dz & \leq\frac{1}{\eps}\int\limits_{-\infty}^{\infty}e^{cz}(q_z)^2\,dz\\
&<\int\limits_{-\infty}^{\infty}pq\,dz\leq\left(\int\limits_{-\infty}^{\infty}e^{cz}p^2\,dz\right)^{1/2}\left(\int\limits_{-\infty}^{\infty}e^{cz}q^2\,dz\right)^{1/2}.
\end{aligned}
\end{equation} Dividing the first and last terms in the inequality by $||q||_{1,c}$ (the $e^{cz}$-weighted $L^2$ norm) and squaring yields
\begin{equation}
\frac{c^4}{16\eps}\int\limits_{-\infty}^{\infty}e^{cz}\frac{q^2}{\eps}\,dz<\int\limits_{-\infty}^{\infty}e^{cz}p^2\,dz,
\end{equation} and the result now follows.
\end{proof}
\begin{rem}
The proof of the preceding theorem uses estimates that are very similar to calculations in \cite{CC15}. However, the objectives of the calculations are very different. In \cite{CC15}, the goal is to establish the existence of a traveling wave using variational techniques. Conversely, we are considering the stability issue, particularly what happens to eigenvalues as the spectral parameter varies.
\end{rem}
\section{Multiplicity of Eigenvalues: the Evans Function}

There is one remaining loose end to tie up if we want the Maslov index to give a complete picture of the unstable spectrum of $L_\eps$, namely, the multiplicity of eigenvalues. In general, for $\lambda\in\sigma_n(L)$, the \emph{geometric multiplicity} of $\lambda$ is given by $\dim\ker (L-\lambda I)$. For $\lambda\in \bbC\setminus H_\eps\cap\sigma_n(L_\eps)$, this number is bounded above by two (or $n$, in the general setting of (\ref{gen pde})), since $E^u(\lambda,z)$ and $E^s(\lambda,z)$ are only two-dimensional. Since $\dim\ker(L-\lambda I)=\dim(E^u(\lambda,z)\cap E^s(\lambda,z))$, it is clear from (\ref{alpha defn}) that the dimension of a crossing for $\alpha_2$ gives the geometric multiplicity of $\lambda$. 

The algebraic multiplicity of $\lambda$, on the other hand, is trickier. It is given by $\dim\ker(L-\lambda I)^\alpha$, where $\alpha$ is the ascent of $\lambda$, i.e. the smallest $\alpha$ for which $\dim\ker(L-\lambda I)^\alpha=\dim\ker(L-\lambda I)^{a+1}$. See \S 6.D of \cite{AGJ} for more details. There is nothing obvious about the Maslov index that addresses the algebraic multiplicity of an eigenvalue. In \cite{HLS16}, self-adjoint operators are studied, and this issue is moot, since the two multiplicities coincide. However, for our purposes it is not obvious that the two multiplicities are the same.

One tool that demonstrably gives information about the algebraic multiplicity of eigenvalues is the Evans function \cite{AGJ,Sandstede02}. Briefly, the Evans function $D(\lambda)$ is a Wronskian-type determinant that detects linear dependence between the sets $E^s(\lambda,z)$ and $E^u(\lambda,z)$. Thus it is zero if and only if $\lambda$ is an eigenvalue of $L$. It is also true (\S 2.E of \cite{AGJ}) that the order of $\lambda$ as a root of $D$ is equal to the algebraic multiplicity of $\lambda$ as an eigenvalue of $L$. This is the key to relating the Maslov index to algebraic multiplicity, as a symplectic version of the Evans function was developed in \cite{BD99,BD01}. The Maslov index in particular was used in Evans function analyses in \cite{CB14,CJ17}. 

The Evans function for (\ref{fhn operator}) is developed in detail in \cite{CJ17}, so we refer the reader there for more background. Since the eigenvalues of $A_\infty(\lambda)$ are real and simple for real $\lambda\geq 0$, we can find solutions $u_i(\lambda,z)$, $i=1\dots4$ to (\ref{eval eqn FHN}) such that \begin{equation}\label{individual soln decay}
\begin{aligned}
\lim\limits_{z\rightarrow\infty}e^{-\mu_i(\lambda)z}u_i(\lambda,z) & =\eta_i(\lambda), \hspace{.2 in}i=1,2\\
\lim\limits_{z\rightarrow-\infty}e^{-\mu_i(\lambda)z}u_i(\lambda,z) & = \eta_i(\lambda), \hspace{.2 in}i=3,4,
\end{aligned}
\end{equation} where $\eta_i(\lambda)$ is a nonzero eigenvector of $A_\infty(\lambda)$ corresponding to eigenvalue $\mu_i(\lambda)$. We can then write \begin{equation}
\begin{aligned}
E^s(\lambda,z)=\mathrm{sp}\{u_1(\lambda,z),u_2(\lambda,z) \}\\
E^u(\lambda,z)=\mathrm{sp}\{u_3(\lambda,z),u_4(\lambda,z) \}
\end{aligned}.
\end{equation} Although the Evans function can be defined without picking bases of the stable and unstable bundles (e.g. \cite{AGJ}), the symplectic structure cannot be exploited without isolating particular solutions. In \cite{CJ17}, these bases are used to define the Evans function as follows. \begin{define}
	The \textbf{Evans function} $D(\lambda)$ for (\ref{fhn operator}) is given by \begin{equation}\begin{aligned}
	\label{Evans fct defn}
	D(\lambda) & =e^{2cz}\det\left[u_1(\lambda,z),u_2(\lambda,z),u_3(\lambda,z),u_4(\lambda,z)\right] \\
	&= -\det\left[\begin{array}{c c}
	\Omega(u_1(\lambda,z),u_3(\lambda,z)) & \Omega(u_1(\lambda,z),u_4(\lambda,z))\\
	\Omega(u_2(\lambda,z),u_3(\lambda,z)) & \Omega(u_2(\lambda,z),u_4(\lambda,z))
	\end{array}\right]
	\end{aligned}.
	\end{equation} 
\end{define} We call the second formulation of $D$ in (\ref{Evans fct defn}) the ``symplectic Evans function." As mentioned above, $D(\lambda)=0$ if and only if $\lambda\in\sigma_n(L_\eps)$, and the order of $\lambda$ as a root of $D$ is equal to its algebraic multiplicity as an eigenvalue of $L_\eps$. $D$ is also independent of $z$, which follows from Theorem \ref{form invariance thm}. The following theorem is the main result of this section.

\begin{theorem}\label{multiplicity theorem}
Let $\lambda^*\in\sigma_n(L_\eps)\cap (\bbC\setminus H_\eps)$. Then the geometric and algebraic multiplicities of $\lambda^*$ are equal. This is equivalent to $\lambda=\lambda^*$ being a regular conjugate point of $\alpha_2$.
\end{theorem}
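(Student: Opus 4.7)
The main tool will be the symplectic Evans function $D(\lambda)=-\det M(\lambda)$ from (\ref{Evans fct defn}), where $M(\lambda)$ is the $2\times 2$ matrix with entries $M_{ij}(\lambda)=\Omega(u_i(\lambda,\tau),u_{j+2}(\lambda,\tau))$. The algebraic multiplicity of $\lambda^*$ equals the order of vanishing of $D$ at $\lambda^*$, while the geometric multiplicity is $k:=\dim(E^u(\lambda^*,\tau)\cap E^s(\lambda^*,\tau))\in\{1,2\}$. The plan is to relate the leading Taylor coefficients of $\det M$ near $\lambda^*$ to the relative crossing form computed in (\ref{lambda crossing form}), and then to invoke Theorem \ref{monotonicity thm} to obtain the regularity that finishes the job.

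First I would choose analytic-in-$\lambda$ bases $u_1,u_2$ of $E^s(\lambda,z)$ and $u_3,u_4$ of $E^u(\lambda,z)$ adapted to the intersection, i.e.\ arranged so that $u_i(\lambda^*,\tau)=u_{i+2}(\lambda^*,\tau)=:P_i$ for $1\le i\le k$. Because both $E^s(\lambda^*,\tau)$ and $E^u(\lambda^*,\tau)$ are Lagrangian, $\Omega$ annihilates any pair of vectors drawn from a single one of these planes; in particular, every entry of $M(\lambda^*)$ involving one of the $P_i$'s vanishes. Thus $\mathrm{rank}\,M(\lambda^*)\le 2-k$, which immediately forces $D$ to vanish to order at least $k$ at $\lambda^*$. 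To get the reverse inequality I need to examine the first surviving term in $\det M(\lambda)$.

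When $k=1$, only the entry $M_{22}(\lambda^*)=\Omega(u_2(\lambda^*,\tau),u_4(\lambda^*,\tau))$ can be nonzero. A short linear-algebra argument shows it \emph{is} nonzero: if $\Omega(u_2,u_4)=0$ at $\lambda^*$ then, combined with $\Omega(u_2,P_1)=0$, the vector $u_2$ would $\Omega$-annihilate all of $E^u(\lambda^*,\tau)$ and hence lie in $(E^u)^\perp=E^u$, contradicting $k=1$. Expanding the determinant to first order, the coefficient of $(\lambda-\lambda^*)$ is $-M_{22}(\lambda^*)\,\partial_\lambda M_{11}|_{\lambda^*}$, and by (\ref{lambda crossing form}) the derivative $\partial_\lambda \Omega(u_1,u_3)|_{\lambda^*,\tau}$ is exactly the crossing form $\Gamma$ evaluated on $P_1$. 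So $D$ has a simple zero at $\lambda^*$ iff $\Gamma(P_1)\ne 0$ iff the crossing is regular. When $k=2$, every entry of $M(\lambda^*)$ vanishes and $D(\lambda)=-(\lambda-\lambda^*)^2\det(\partial_\lambda M|_{\lambda^*})+O((\lambda-\lambda^*)^3)$; I would then polarize (\ref{lambda crossing form}), using the skew-symmetry of $\Omega$ together with the vanishing of $\Omega(P_i,P_j)$, to show that $\partial_\lambda M|_{\lambda^*}$ is the matrix of the symmetric bilinearization of the relative crossing form in the basis $\{P_1,P_2\}$. Its determinant is nonzero precisely when $\Gamma$ is nondegenerate on the two-dimensional intersection.

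Combining the two cases yields the equivalence ``algebraic $=$ geometric multiplicity $\iff$ regular crossing.'' The actual equality of the two multiplicities is then immediate from Theorem \ref{monotonicity thm}, which asserts that the $\lambda$-crossing form at every conjugate point of $\alpha_2$ is positive definite, hence nondegenerate. The main obstacle in this plan is the $k=2$ case: relating $\partial_\lambda M_{ij}$ for $i\ne j$ to the bilinearization of $\Gamma$ requires a careful computation, since (\ref{lambda crossing form}) was only derived for a single vector $\xi$ with $u(\lambda^*)=v(\lambda^*)=\xi$, and one must verify that the off-diagonal derivative $\partial_\lambda\Omega(u_i,u_{j+2})|_{\lambda^*,\tau}$ genuinely represents the polar form and not some other extension.
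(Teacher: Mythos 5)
Your proposal follows essentially the same route as the paper's proof: adapt analytic bases of $E^{s/u}$ to the intersection, expand $\det M$ near $\lambda^*$, identify the first nonvanishing derivative of $D$ with (a nonzero multiple of) the determinant of the $\lambda$-crossing-form matrix, handle the $k=1$ case by showing $\Omega(u_2,u_4)\neq 0$ via the Lagrangian/dimension-count argument, and invoke Theorem \ref{monotonicity thm} for nondegeneracy. The one point you flag as an obstacle in the $k=2$ case (that $\partial_\lambda M|_{\lambda^*}$ is genuinely the matrix of the crossing form) is treated in the paper by the same direct Leibniz-rule computation of $D''(\lambda^*)$ and the identification via (\ref{2d crossing}), so your plan is correct and matches the published argument.
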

\begin{proof}
We prove this separately for $\lambda$ with geometric multiplicity one and two. Recall that a crossing is regular if the associated crossing form (\ref{lambda crossing form}) is nondegenerate. First, suppose that $\lambda^*$ is an eigenvalue of $L_\eps$ with geometric multiplicity one. The goal is to show that $D'(\lambda^*)\neq 0$. Let $P(z)$ be a corresponding eigenfunction. We can perform a change of basis near $\lambda=\lambda^*$ so that \begin{equation}
\begin{aligned}
E^s(\lambda,z)=\mathrm{sp}\{U(\lambda,z),a_s(\lambda,z) \}\\
E^u(\lambda,z)=\mathrm{sp}\{V(\lambda,z),a_u(\lambda,z)\}
\end{aligned},
\end{equation} with $U(\lambda^*,z)=V(\lambda^*,z)=P(z)$. Doing so changes $D(\lambda)$ by multiplication with a nonzero analytic function $C(\lambda)$ (\S 4.1 of \cite{Sandstede02}). Since $D(\lambda^*)=0$, we have \begin{equation}
\frac{d}{d\lambda}\left[D(\lambda)C(\lambda)\right]|_{\lambda=\lambda^*}=D'(\lambda^*)C(\lambda^*),
\end{equation} so making this change of basis does not affect whether or not the derivative of $D$ at $\lambda^*$ vanishes. It therefore suffices to consider $\tilde{D}'(\lambda^*)$, with \begin{equation}
\tilde{D}(\lambda)=-\det\left[\begin{array}{c c}
\Omega(U(\lambda,z),V(\lambda,z)) & \Omega(U(\lambda,z),a_u(\lambda,z))\\
\Omega(a_s(\lambda,z),V(\lambda,z)) & \Omega(a_s(\lambda,z),a_u(\lambda,z))
\end{array}\right].
\end{equation} (See Theorem 2 and Corollary 1 of \cite{CJ17} for more details on the derivation of this formula.) The desired derivative is computed using Jacobi's formula (\S 8.3 of \cite{MN88}), and an identical calculation is carried out in equation (4.5) of \cite{CJ17}. The result is that \begin{equation}
D'(\lambda^*)=\Omega(a_s(\lambda,z),a_u(\lambda,z))\,\p_\lambda\Omega(U(\lambda,z),V(\lambda,z))|_{\lambda=\lambda^*,z=\tau}.
\end{equation} Define $\xi=P(\tau)$. Comparing with (\ref{lambda crossing form}), we see that \begin{equation}
\p_\lambda \Omega(U(\lambda,z),V(\lambda,z))|_{\lambda=\lambda^*,z=\tau}=-\Gamma(E^u(\cdot,\tau),E^s(\cdot,\tau),\lambda^*)(\xi),
\end{equation} which is nonzero, since $\lambda^*$ is a regular crossing by Theorem \ref{monotonicity thm}. It would follow that $D'(\lambda^*)\neq 0$, and hence that $\lambda^*$ is a simple eigenvalue of $L_\eps$, if $\Omega(a_s(\lambda^*,z),a_u(\lambda^*,z))\neq 0.$ It turns out that this is equivalent to $\lambda^*$ having geometric multiplicity one. Indeed, if $\Omega(a_s(\lambda^*,z),a_u(\lambda^*,z))=0$, then $\mathrm{sp}\{a_s(\lambda^*,z),a_u(\lambda^*,z) \}$ is a Lagrangian plane. A simple dimension-counting argument (cf. page 85 of \cite{CB14}) then implies that \begin{equation}
E^s(\lambda^*,z)=\mathrm{sp}\{U,a_s\}=\mathrm{sp}\{V,a_u\}=E^u(\lambda^*,z).
\end{equation} We now turn to the case where $\lambda^*$ is a two-dimensional crossing, meaning that \begin{equation}
\dim(E^u(\lambda^*,z)\cap E^s(\lambda^*,z))=2.
\end{equation} By making another change of basis if necessary, we are free to assume that \begin{equation}\label{2d crossing}
\begin{aligned}
u_1(\lambda^*,z)=u_4(\lambda^*,z)\\
u_2(\lambda^*,z)=u_3(\lambda^*,z)
\end{aligned}.
\end{equation} We then set $\xi_1=u_1(\lambda^*,\tau)$ and $\xi_2=u_2(\lambda^*,\tau)$. Since the algebraic multiplicity of $\lambda^*$ is no less than its geometric multiplicity, we know \emph{a priori} that $D'(\lambda^*)=0$. This is easily verified by applying the product rule to (\ref{Evans fct defn}), which is the zero matrix for $\lambda=\lambda^*$. What we need to verify is that $D''(\lambda^*)\neq 0$, and that this is equivalent to the regularity of the crossing form. To see this, we use (\ref{Evans fct defn}) to write out \begin{equation}\label{Evans det}
D(\lambda)=\Omega(u_1(\lambda,z),u_4(\lambda,z))\Omega(u_2(\lambda,z),u_3(\lambda,z))-\Omega(u_1(\lambda,z),u_3(\lambda,z))\Omega(u_2(\lambda,z),u_4(\lambda,z)).
\end{equation} Evaluating at $\lambda=\lambda^*$, each of the four terms in (\ref{Evans det}) is zero, using (\ref{2d crossing}) and the fact that $E^{u/s}(\lambda,z)$ are Lagrangian planes. As mentioned above, we can see from (\ref{Evans det}) that $D'(\lambda^*)=0$, since the derivative produces a series of four terms, each of which is a product with a factor of zero. Computing $D''(\lambda^*)$ from the general Leibniz rule, we see that the only surviving terms are those for which each factor in (\ref{Evans det}) is differentiated once. Explicitly, we compute that \begin{equation}\label{D'' calc}
\begin{aligned}
D''(\lambda^*)  = & 2\left\{\p_\lambda\Omega(u_1(\lambda,z),u_4(\lambda,z))\p_\lambda\Omega(u_2(\lambda,z),u_3(\lambda,z))\right.\\
& \left.-\p_\lambda\Omega(u_1(\lambda,z),u_3(\lambda,z))\p_\lambda\Omega(u_2(\lambda,z),u_4(\lambda,z)) \right\}|_{\lambda=\lambda^*,z=\tau}\\
& = -2\det\left[\begin{array}{c c}
\p_\lambda\Omega(u_1(\lambda,z),u_3(\lambda,z)) & \p_\lambda\Omega(u_1(\lambda,z),u_4(\lambda,z))\\
\p_\lambda\Omega(u_2(\lambda,z),u_3(\lambda,z)) & \p_\lambda\Omega(u_2(\lambda,z),u_4(\lambda,z))
\end{array}\right]|_{\lambda=\lambda^*,z=\tau}\\
&= 2\det\left[\begin{array}{c c}
\p_\lambda\Omega(u_1(\lambda,z),u_4(\lambda,z)) & \p_\lambda\Omega(u_1(\lambda,z),u_3(\lambda,z))\\
\p_\lambda\Omega(u_2(\lambda,z),u_4(\lambda,z)) & \p_\lambda\Omega(u_2(\lambda,z),u_3(\lambda,z))
\end{array}\right]|_{\lambda=\lambda^*,z=\tau}.
\end{aligned}
\end{equation} We see from (\ref{2d crossing}) that the last matrix (obtained by switching columns and taking a transpose in the previous line) is exactly the matrix of the crossing form $\Gamma$ in (\ref{lambda crossing form}). To say that $\Gamma$ is nondegenerate means that the determinant in (\ref{D'' calc}) is nonzero, hence $D''(\lambda^*)\neq0$, as desired.
\end{proof}
\begin{rem}
	Although we phrased the preceding theorem for the operator $L_\eps$, it is clear that the proof generalizes to (\ref{gen pde}). At an $n$-dimensional crossing $\lambda^*$, the first $(n-1)$ derivatives of $D(\lambda)$ are forced to vanish. The $n^{\text{th}}$ derivative will then contain a factor corresponding to the $\lambda$-crossing form $\Gamma$. The number of zeros of $\Gamma$ in normal form (page 186 of \cite{vinberg}) counts the discrepancy between the algebraic and geometric multiplicities of $\lambda^*$ as an eigenvalue of $L$.
\end{rem}
Although algebraic versus geometric multiplicity seems like a picayune detail, it is actually critical in the case $\lambda=0$. Indeed, this eigenvalue is always present for traveling waves in autonomous equations. For such waves in semilinear parabolic systems, we have the following well-known result (e.g. \cite{AGJ}). \begin{theorem}\label{nonlinear stab theorem}
	Suppose that the operator $L$ in (\ref{L defn}) satisfies \begin{enumerate}
		\item There exists $\beta<0$ such that $\sigma(L)\setminus\{0\}\subset\{\lambda\in\bbC:\mathrm{Re }\lambda<\beta\}$.
		\item $0$ is a simple eigenvalue.
	\end{enumerate} Then $\hat{u}$ is stable in the sense of Definition \ref{stability defn}.
\end{theorem}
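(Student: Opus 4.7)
The plan is to reduce the nonlinear stability claim to a standard semigroup argument on the Banach space $X = BU(\bbR,\bbR^n)$, exploiting the spectral gap given by the hypotheses and handling the neutral mode at $\lambda = 0$ via modulation. This is the classical Henry--Alexander--Gardner--Jones recipe, so the point of the proof is to assemble the pieces already available in the paper rather than to introduce new machinery.

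First, I would write (\ref{tw pde}) as an abstract evolution equation $w_t = Lw + N(w)$ for the perturbation $w = u - \hat u$, where $L$ is the operator (\ref{L defn}) and $N(w) = QS\bigl[f(\hat u + w) - f(\hat u) - f'(\hat u)w\bigr]$ is quadratically small in $w$ near $0$. Standard semilinear parabolic theory (see \cite{Henry}) furnishes an analytic semigroup $e^{tL}$ on $X$. Hypothesis (1) of the theorem, together with the description of $\sigma_{\mathrm{ess}}(L)$ from \S 2, ensures that $\sigma(L) \setminus \{0\}$ lies strictly in the open left half-plane.

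Second, I would isolate the neutral direction. Differentiating the traveling-wave equation $\hat u_{zz} + c\hat u_z + QSf(\hat u) = 0$ in $z$ gives $L\hat u' = 0$, so $\hat u' \in \ker L$. Hypothesis (2) says that $0$ is a simple eigenvalue, hence $\ker L = \mathrm{sp}\{\hat u'\}$ and the ascent is one. The associated Riesz projection $P_0$ decomposes $X = \ker L \oplus X_c$ into $L$-invariant closed subspaces, and Condition (1) yields constants $M \ge 1$ and $0 < \mu < -\beta$ such that
\begin{equation}
\|e^{tL}|_{X_c}\|_{X \to X} \le M e^{-\mu t}, \qquad t \ge 0.
\end{equation}

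Third, I would set up the modulation ansatz $u(z,t) = \hat u(z + \gamma(t)) + w(z,t)$, imposing $P_0 w(\cdot,t) = 0$ so that $w(\cdot,t) \in X_c$. The implicit function theorem, applied at $\gamma = 0, w = 0$ with nondegeneracy coming from $P_0 \hat u' \neq 0$, uniquely determines a smooth $\gamma(t)$ as long as $\|u(\cdot,t) - \hat u\|_\infty$ stays small. Substituting the ansatz back into (\ref{tw pde}) yields coupled equations: one for $w \in X_c$ governed by $e^{tL}|_{X_c}$ and a nonlinearity that is quadratic in $(w, \gamma')$, and a scalar ODE for $\gamma'(t)$ whose right-hand side is likewise quadratic in $w$ once $P_0$ is applied.

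Finally, Duhamel's formula on $X_c$ combined with the exponential decay of $e^{tL}|_{X_c}$ closes a Gronwall estimate of the form $\|w(t)\|_X \le C \|w(0)\|_X e^{-\mu' t}$ for some $0 < \mu' < \mu$, provided the initial data is sufficiently small. This in turn bounds $\int_0^\infty |\gamma'(s)|\,ds$, so $\gamma(t) \to k$ for a finite $k \in \bbR$, and then
\begin{equation}
\|\hat u(\cdot + k) - u(\cdot,t)\|_\infty \le \|\hat u(\cdot + k) - \hat u(\cdot + \gamma(t))\|_\infty + \|w(\cdot,t)\|_\infty \longrightarrow 0
\end{equation}
as $t \to \infty$, which is precisely Definition \ref{stability defn}.

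The main delicate step — since the overall theorem is classical — is the modulation argument: one must verify that the implicit function theorem applies uniformly in $t$ and that the coupling between the $\gamma$-equation and the $w$-equation does not destroy the exponential decay predicted by the linear semigroup. This is also where simplicity of the eigenvalue at $0$ is essential: any higher-order Jordan block would introduce polynomial (secular) growth in $e^{tL}|_{\ker L^k}$ that the modulation could not absorb, and the proof would break down. Granting simplicity, however, the closure of the fixed-point estimate is routine and can be quoted from \S 5 of \cite{Henry} or \S 4 of \cite{AGJ}.
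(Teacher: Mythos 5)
The paper does not actually prove this theorem: it is quoted as a classical result with a pointer to \cite{AGJ} (and implicitly \cite{Henry}), and your outline is precisely the standard semigroup-plus-spectral-projection-plus-modulation argument contained in those references. Your sketch is essentially correct as the intended proof; the only point worth flagging is that the residual linear term in the $w$-equation, $QS\bigl(f'(\hat u(\cdot+\gamma))-f'(\hat u)\bigr)w$, is quadratic in the pair $(\gamma,w)$ rather than $(\gamma',w)$, which is exactly the small subtlety the cited treatments handle when closing the Gronwall estimate.
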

It is possible that $E^u(0,z)\cap E^s(0,z)=\mathrm{sp}\{\vp'(z) \}$ is one dimensional, but that $\lambda=0$ is still not a simple eigenvalue. In \cite{AJ94} (pp. 57-60), it is shown that $\lambda=0$ is simple if and only if the wave is \emph{transversely constructed}, in the following sense. With the equation $c'=0$ appended to (\ref{tw ode}), $W^{cu}(0)$ and $W^{cs}(0)$ are each $(n+1)$-dimensional. The wave $\vp$ is said to be transversely constructed if $W^{cu}(0)$ and $W^{cs}(0)$ intersect transversely in $\bbR^{2n+1}$, and their (necessarily one-dimensional) intersection is $\vp(z)$. Thus the geometric interpretation of simplicity is that two manifolds intersect transversely in augmented phase space. 

By contrast, the understanding of simplicity afforded by the symplectic structure requires no variation in $c$. Instead, we see that $\lambda=0$ (or any other eigenvalue) is simple if the curve $\lambda\mapsto E^u(\lambda,\tau)$ transversely intersects the train of $E^s(0,\tau)$ for all sufficiently large $\tau$. To see this, notice that Theorem \ref{multiplicity theorem} proves that the eigenvalue is simple if and only if the relative crossing form (\ref{lambda crossing form}) is regular. But if the integral (\ref{lambda crossing form mono}) is nonzero, then so will be the integral in (\ref{lambda xing form u}) for $\tau$ large enough. Alternatively, for an eigenvalue $\lambda^*$ with geometric multiplicity one, being simple is equivalent to the curves $\lambda\mapsto E^u(\lambda,\tau),E^s(\lambda,\tau)$ intersecting non-tangentially at $\lambda=\lambda^*$.

\bibliographystyle{amsplain}
\bibliography{FHN_Maslov2}

\end{document}